\newcommand{\Dfn}[1]{\emph{\color{blue}#1}} % to highlight and collect definitions
\newcommand{\size}[1]{\left\lvert #1\right\rvert} % the size of a set
\DeclareMathOperator{\inumber}{\mathtt i}
\DeclareMathOperator{\dindex}{\mathtt t}
\DeclareMathOperator{\dualmaj}{\widehat{\mathtt maj}}
\DeclareMathOperator{\depth}{depth}
\DeclareMathOperator{\arcs}{\text Arcs}
\DeclareMathOperator{\Dstat}{\mathtt D}
\theoremstyle{definition}
\newtheorem{Theorem}{Theorem}
\newtheorem{Lemma}[equation]{Lemma}
\newtheorem{Proposition}{Proposition}
\theoremstyle{definition}
\newtheorem{Definition}[equation]{Definition}
\newtheorem{Definition-Remark}[equation]{Definition/Remark}
\newtheorem{Example}[equation]{Example}
\theoremstyle{remark}
\newtheorem{Remark}[equation]{Remark}
\numberwithin{equation}{section}
\numberwithin{figure}{section}
\newcommand{\F}{\mathbb{F}}
\newcommand{\PP}{\mathbb{P}}
\newcommand{\mc}[1]{\mathcal{#1}} % short for mathcal
\newcommand{\ms}[1]{\mathscr{#1}} % short for mathscript
\newcommand{\mb}[1]{\mathbb{#1}} % short for mathblackboard
\newcommand{\mt}[1]{\text{#1}}
\newcommand{\BM}[1]{\overline{\mathbb{B}}_{#1}}
\begin{document}

\title{A Geometric Interpretation of the Intertwining Number}

\author[1]{Mahir Bilen Can\thanks{Partially supported by a grant from Louisiana Board of Regents}}
\author[2]{Yonah Cherniavsky}
\author[3]{Martin Rubey\thanks{Supported by the Austrian Science Fund (FWF): P 29275}}

\affil[1]{\small Tulane University, New Orleans; mahirbilencan@gmail.com}
\affil[2]{\small Ariel University, Israel; yonahch@ariel.ac.il}
\affil[3]{\small Technische Universit\"{a}t Wien; martin.rubey@tuwien.ac.at}

%\normalsize

%\date{\today}

\maketitle

\begin{abstract}
We exhibit a connection between two statistics on set partitions,
the intertwining number and the depth-index. In particular, results link
the intertwining number to the algebraic geometry of Borel orbits.
Furthermore, by studying the generating polynomials of our statistics,
we determine the $q=-1$ specialization of a $q$-analogue of the Bell numbers.
Finally, by using Renner's $H$-polynomial of an algebraic monoid,
we introduce and study a $t$-analog of $q$-Stirling numbers.
\vspace{.2cm}

\noindent
\textbf{Keywords:} Set partitions; Borel orbits,
intertwining number; depth-index, $-q$ analysis.\\
\noindent
\textbf{MSC:} 05A15, 14M15.
\end{abstract}

\section{Introduction}
%\subsection{Brief introduction and main result}

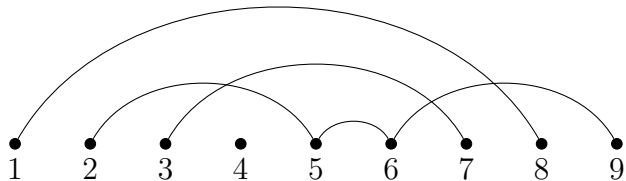
\begin{figure}
  \centering
% sage: A = SetPartition([[1,8],[2,5,6,9],[3,7],[4]])
% sage: A.set_latex_options(plot='linear', angle=30)
% sage: latex(A)
\begin{tikzpicture}[scale=1]
\node[below=.05cm] at (0,0) {$1$};
\node[draw,circle, inner sep=0pt, minimum width=4pt, fill=black] (0) at (0,0) {};
\node[below=.05cm] at (1,0) {$2$};
\node[draw,circle, inner sep=0pt, minimum width=4pt, fill=black] (1) at (1,0) {};
\node[below=.05cm] at (2,0) {$3$};
\node[draw,circle, inner sep=0pt, minimum width=4pt, fill=black] (2) at (2,0) {};
\node[below=.05cm] at (3,0) {$4$};
\node[draw,circle, inner sep=0pt, minimum width=4pt, fill=black] (3) at (3,0) {};
\node[below=.05cm] at (4,0) {$5$};
\node[draw,circle, inner sep=0pt, minimum width=4pt, fill=black] (4) at (4,0) {};
\node[below=.05cm] at (5,0) {$6$};
\node[draw,circle, inner sep=0pt, minimum width=4pt, fill=black] (5) at (5,0) {};
\node[below=.05cm] at (6,0) {$7$};
\node[draw,circle, inner sep=0pt, minimum width=4pt, fill=black] (6) at (6,0) {};
\node[below=.05cm] at (7,0) {$8$};
\node[draw,circle, inner sep=0pt, minimum width=4pt, fill=black] (7) at (7,0) {};
\node[below=.05cm] at (8,0) {$9$};
\node[draw,circle, inner sep=0pt, minimum width=4pt, fill=black] (8) at (8,0) {};
\draw[color=black] (6) to [out=120,in=60] (2);
\draw[color=black] (7) to [out=120,in=60] (0);
\draw[color=black] (4) to [out=120,in=60] (1);
\draw[color=black] (5) to [out=120,in=60] (4);
\draw[color=black] (8) to [out=120,in=60] (5);
\end{tikzpicture}
\caption{The arc-diagram of the set partition
  $A = 18 | 2569 | 37 | 4$.}
\label{F:introexample}
\end{figure}

This paper is concerned with the intertwining number of a set partition, which is a combinatorial statistic introduced by Ehrenborg and Readdy in~\cite{ER}. This statistic is among the combinatorial parameters on set partitions whose generating function is an important $q$-analog of the Stirling numbers of the second kind: 
\begin{align}\label{A:particular}
S_q(n,k)=
\begin{cases}
q^{k-1}S_q(n-1,k-1)+[k]_qS_q(n-1,k) & \ \text{ if $n,k\geq 1$};\\
\delta_{n,k} & \ \text{ if $n=0$ or $k=0$.}
\end{cases}
\end{align}
Here $\delta_{n,k}$ is the Kronecker's delta function.
As far as we know, this recurrence
has first appeared in a paper of Milne who showed that (\ref{A:particular})
has a combinatorial interpretation in terms of statistics on set partitions.
After Milne's work, many authors found interesting combinatorial statistics
whose (bi)generating polynomials satisfy the recurrence in (\ref{A:particular}),
see for example~\cite{WachsWhite}.
For more recent results on the $q$-Stirling numbers,
and an exposition of the history of Stirling numbers,
we recommend the articles~\cite{CR,CER1}.

In the present paper we connect the intertwining number to another statistic on the set partitions, 
namely, the depth-index, which was recently introduced and studied in~\cite{CC} by the first two authors. 
The depth-index, although defined in purely combinatorial terms, equals the dimension of the closure of a certain (doubled) Borel orbit, 
and thus, the intertwining number also receives a geometric interpretation. 
One purpose of this article is to show that the
depth-index is related in an
interesting way to other set partition
statistics.

Let us briefly set up the notation that is necessary to state our main
results.  Let $A$ be a set partition of $\{1,\dots, n\}$ into
\Dfn{blocks} $A_1,\dots,A_k$.  The minimal elements of the blocks are
the \Dfn{openers}, and the maximal elements are the \Dfn{closers} of
the set partition.  For example, $A = 18 | 2569 | 37 | 4$ in $\Pi_9$
has openers $1,2,3,4$ and closers $4,7,8,9$.

Let us assume that the elements of each block are listed in
increasing order, that is, two elements $i,j$ in a block are
consecutive if $j$ is the smallest element larger than $i$ in the
same block.  Then the \Dfn{arc diagram} of a set partition
$A\in\Pi_n$ is obtained by placing labels $1,\dots,n$ in this order
on a horizontal line, and connecting consecutive elements of each
block by arcs, as in Figure~\ref{F:introexample}.

The \Dfn{extended arc diagram} is obtained from the arc diagram by
adding a half-arc $(-\infty, i)$ from the far left to each opener
$i$, and a half-arc $(i, \infty)$ from each closer $i$ to the far
right.  These arcs are drawn in such a way that half-arcs to the left
do not cross, and half-arcs to the right do not cross either.  An
example is shown in Figure~\ref{fig:depth-index-intertwining-number}.

Two (generalized) arcs $(i,j)$ and $(k,\ell)$ \Dfn{cross} in $A$ if
$i < k < j < \ell$.  The total number of crossings in $A$ is the
\Dfn{intertwining
  number}\footnote{\url{http://www.findstat.org/St000490}} of $A$,
denoted $\inumber(A)$.

%Two crossing (generalized) arcs form a \Dfn{simple crossing} in $A$
%if $(i,j)$ and $(k,\ell)$ cross and there is no (generalized) arc
%$(r,s)$ in $A$ such that $i\leq r < k < j < s \leq\ell$.
%Essentially, a crossing of two arcs is simple, if it is not part of a
%triple of mutually crossing (generalized) arcs.

To indicate the geometric motive of our work, we will   
briefly mention the \Dfn{Bruhat-Chevalley-Renner order}
for set partitions: let $\mb{B}_n$ be the group of invertible upper
triangular $n\times n$ matrices and let $\ms{B}_n$ be the monoid of
upper triangular matrices with entries in $\{0,1\}$, having at most one
non-zero entry in each row and column.  Then, for $\sigma$ and $\tau$
in $\ms{B}_n$,
\[
  \sigma \leq \tau \iff%
  \mb{B}_n\sigma\mb{B}_n \subseteq \overline{ \mb{B}_n\tau\mb{B}_n},
\]
where $\overline{X}$ denotes the Zariski closure of $X$.  It was
shown by Renner~\cite[sec.8]{R} that this makes $\ms{B}_n$ into a
graded poset, the rank of $\sigma$ being
$\dim \mb{B}_n \sigma \mb{B}_n$.  Let $\ms{B}_n^{nil}$ be
the semigroup of nilpotent elements in $\ms{B}_n$.
Then the following simple bijection between
$\ms{B}^{nil}_n$ and $\Pi_n$ makes this order into an order of
$\Pi_n$: the matrix corresponding to the set partition $A$ has an
entry equal to $1$ in row $i$ and and column $j$ if and only if
$(i,j)$ is an arc of $A$.

%\todo{this is a duplication of the second paragraph on page 2}
A purely combinatorial way to describe the above partial order on set partitions was recently introduced in~\cite{CC}, 
where the rank function of the poset was given by a certain combinatorial statistic (on arc-diagrams), called the 
depth-index of $A$ and was denoted by $\dindex(A)$.
Now we are ready to outline the structure of this article and to mention our results which connect
the depth-index to the other statistics. 

First of all, in Section~\ref{S:two}, we relate the intertwining number to the depth-index; 
this is our Theorem~\ref{T:first main result}. One the one hand,  the depth-index $\dindex(A)$ 
is the rank function of the Bruhat-Chevalley-Renner order on the set partitions. 
On the other hand, the poset of doubled Borel orbits ordered by 
the containment relations on closures is ranked by the dimension function.
In other words, the depth-index $\dindex(A)$ gives the dimension of $\overline{ \mb{B}_n\tau\mb{B}_n}$ where $\tau$ is the upper-triangular partial permutation matrix which corresponds to the set partition $A$. Thus, it follows from Theorem~\ref{T:first main result} that the intertwining number $\inumber(A)$ is the rank function of the dual poset, 
$\inumber(A) = \text{codim}\, ( \overline{ \mb{B}_n\tau\mb{B}_n})$. Notice that from a computational point of view the intertwining number is simpler than the depth index.
In Section~\ref{S:three}, we give another combinatorial interpretation of the depth-index and of the intertwining number using so-called rank-control matrices. In Section~\ref{S:four}, we apply Theorem~\ref{T:first main result} to compute $q$-Bell numbers corresponding to the depth-index when $q=-1$. In Section~\ref{S:five}, we use Renner's $H$-polynomial of an algebraic monoid to introduce and study a new $t$-analog of $q$-Stirling numbers.
\vspace{.25cm}

\section{The intertwining number and the depth-index}\label{S:two}
From now on, we identify a set partition with its arc-diagram.
The blocks of a set partitions are chains in its arc-diagram.
We will frequently use the following well-known and very important,
albeit rather obvious fact:
If $A$ is an arc-diagram on $n$ vertices with $k$ arcs,
then the number of chains of
$A$ is $n-k$, while singletons are also considered as chains.
In this regard, we will denote by $\Pi_{n,k}$ ($k=1,\dots, n$)
the set of set partitions of $\{1,\dots, n\}$ with $k$ blocks,
or equivalently, the set of arc-diagrams on $n$ vertices
with $k$ chains.
%\todo{I think we don't actually use this, and it is really trivial}

In this section we exhibit the relationship between Ehrenborg and
Readdy's intertwining number, introduced in~\cite{ER}, and the depth
index, introduced in~\cite{CC}.

\begin{Definition}\label{D:intertwining and dindex}
Let $A$ be a set partition from $\Pi_{n,k}$.
Then the \Dfn{intertwining
number}\footnote{\url{http://www.findstat.org/St000490}}
$\inumber(A)$ of $A$ is the total number of crossings in the
extended arc diagram of $A$.
More formally, for a pair of disjoint sets $B$ and $C$ of integers,
the intertwining number is the cardinality of the set
  \[
    \left\{(b,c)\in B\times C\,:\,\{\min(b,c)+1,\dots,
      \max(b,c)-1\}\cap \left(B\cup C\right)=\emptyset\right\},
  \]
and the intertwining number of a set partition $A$ is the sum of
intertwining numbers of all pairs of blocks of $A$.

Let us denote by $\arcs(A)$ the set of arcs of $A$.
The \Dfn{depth index}\footnote{\url{http://www.findstat.org/St001094}}
$\dindex(A)$ of $A$ is
\begin{align}\label{A:dindex def}
    \sum_{i=1}^k (n-i)%
    -\sum_{v=1}^n \depth(v)%
    +\sum_{\alpha\in\arcs(A)} \depth(\alpha),
\end{align}
where $\depth(v)$, which is called the depth of a vertex $v$,
is the number of arcs $(i,j)\in\arcs(A)$ with $1\leq i<v<j\leq n$,
and $\depth(\alpha)$, which is called the depth of an arc $\alpha=(u,v)$,
is the number of arcs $(i,j)\in\arcs(A)$ with $1\leq i<u<v<j\leq n$.
\end{Definition}

The intertwining number is best understood by visualizing it
on the extended arc diagram of the set partition. 
An example is depicted in 
Figure~\ref{fig:depth-index-intertwining-number}.

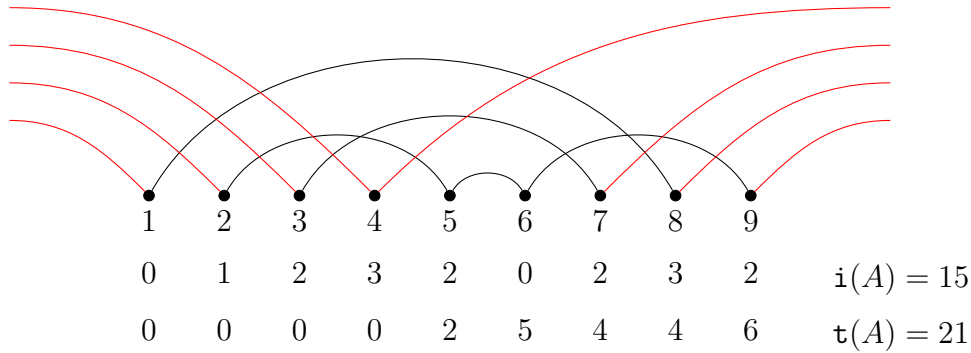
\begin{figure}[h]
  \centering
% sage: A = SetPartition([[1,8],[2,5,6,9],[3,7],[4]])
% sage: A.set_latex_options(plot='linear', angle=30)
% sage: latex(A)
\begin{tikzpicture}[scale=1]
\node[below=.05cm] at (0,0) {$1$};
\node[draw,circle, inner sep=0pt, minimum width=4pt, fill=black] (0) at (0,0) {};
\node[below=.05cm] at (1,0) {$2$};
\node[draw,circle, inner sep=0pt, minimum width=4pt, fill=black] (1) at (1,0) {};
\node[below=.05cm] at (2,0) {$3$};
\node[draw,circle, inner sep=0pt, minimum width=4pt, fill=black] (2) at (2,0) {};
\node[below=.05cm] at (3,0) {$4$};
\node[draw,circle, inner sep=0pt, minimum width=4pt, fill=black] (3) at (3,0) {};
\node[below=.05cm] at (4,0) {$5$};
\node[draw,circle, inner sep=0pt, minimum width=4pt, fill=black] (4) at (4,0) {};
\node[below=.05cm] at (5,0) {$6$};
\node[draw,circle, inner sep=0pt, minimum width=4pt, fill=black] (5) at (5,0) {};
\node[below=.05cm] at (6,0) {$7$};
\node[draw,circle, inner sep=0pt, minimum width=4pt, fill=black] (6) at (6,0) {};
\node[below=.05cm] at (7,0) {$8$};
\node[draw,circle, inner sep=0pt, minimum width=4pt, fill=black] (7) at (7,0) {};
\node[below=.05cm] at (8,0) {$9$};
\node[draw,circle, inner sep=0pt, minimum width=4pt, fill=black] (8) at (8,0) {};
\draw[color=black] (6) to [out=120,in=60] (2);
\draw[color=black] (7) to [out=120,in=60] (0);
\draw[color=black] (4) to [out=120,in=60] (1);
\draw[color=black] (5) to [out=120,in=60] (4);
\draw[color=black] (8) to [out=120,in=60] (5);
% left lines
\node[] (a) at (-2,1) {};
\node[] (b) at (-2,1.5) {};
\node[] (c) at (-2,2) {};
\node[] (d) at (-2,2.5) {};
\draw[color=red] (0) to [out=135,in=0] (a);
\draw[color=red] (1) to [out=135,in=0] (b);
\draw[color=red] (2) to [out=135,in=0] (c);
\draw[color=red] (3) to [out=135,in=0] (d);
% right lines
\node[] (e) at (10,1) {};
\node[] (f) at (10,1.5) {};
\node[] (g) at (10,2) {};
\node[] (h) at (10,2.5) {};
\draw[color=red] (8) to [out=45,in=180] (e);
\draw[color=red] (7) to [out=45,in=180] (f);
\draw[color=red] (6) to [out=45,in=180] (g);
\draw[color=red] (3) to [out=45,in=180] (h);
% crossing numbers
\node[below=0.75cm] at (0) {$0$};
\node[below=0.75cm] at (1) {$1$};
\node[below=0.75cm] at (2) {$2$};
\node[below=0.75cm] at (3) {$3$};
\node[below=0.75cm] at (4) {$2$};
\node[below=0.75cm] at (5) {$0$};
\node[below=0.75cm] at (6) {$2$};
\node[below=0.75cm] at (7) {$3$};
\node[below=0.75cm] at (8) {$2$};
\node[below=0.75cm] at (10,0) {$\inumber(A)=15$};
% depth index numbers
\node[below=1.5cm] at (0) {$0$};
\node[below=1.5cm] at (1) {$0$};
\node[below=1.5cm] at (2) {$0$};
\node[below=1.5cm] at (3) {$0$};
\node[below=1.5cm] at (4) {$2$};
\node[below=1.5cm] at (5) {$5$};
\node[below=1.5cm] at (6) {$4$};
\node[below=1.5cm] at (7) {$4$};
\node[below=1.5cm] at (8) {$6$};
\node[below=1.5cm] at (10,0) {$\dindex(A)=21$};
\end{tikzpicture}
\caption{The extended arc diagram of the set partition
  $A = 18 | 2569| 37|4$, together with the
  computation of the intertwining number and the depth index.}
  \label{fig:depth-index-intertwining-number}
\end{figure}

\begin{Remark}
The second sum appearing in (\ref{A:dindex def}), that is $\sum_{v=1}^n \depth(v)$,
coincides with the dimension exponent\footnote{\url{http://www.findstat.org/St000572}},
\[
  \sum_{B\text{ is a block of } A} (\max B - \min B + 1) - n.
\]
Moreover, the sum $\sum_{\alpha\in\arcs(A)} \depth(\alpha)$ is just
the number of
nestings\footnote{\url{http://www.findstat.org/St000233}} of $A$.
Note that the number of nestings is equidistributed with the number
of crossings\footnote{\url{http://www.findstat.org/St000232}}.  This
statistic and the dimension exponent both occur in the theory of
supercharacters.  More precisely, for any supercharacter indexed by a
set partition $A$, the dimension is given by the dimension exponent
of $A$, and the scalar product with a supercharacter indexed by the
same set partition equals the number of crossings of $A$, see~\cite{ACDS}.
\end{Remark}

Our main result is the following:
\begin{Theorem}\label{T:first main result}
  For any set partition $A\in\Pi_n$, we have
  \[
    \dindex(A) + \inumber(A) = \binom{n}{2}.
  \]
\end{Theorem}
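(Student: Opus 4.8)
My plan is to evaluate $\inumber(A)$ directly by sorting the crossings of the extended arc diagram according to the types of the two arcs involved, and then to simplify the result until it visibly complements the three terms defining $\dindex(A)$. Write $k$ for the number of blocks of $A$, let $\mathrm{cr}(A)$ be the number of pairs of ordinary arcs that cross, and let $\mathrm{ne}(A)=\sum_{\alpha\in\arcs(A)}\depth(\alpha)$, which by the Remark is the number of pairs of ordinary arcs one nested inside the other. A crossing of the extended diagram involves either two ordinary arcs, an ordinary arc and a left half-arc, an ordinary arc and a right half-arc, or a left half-arc and a right half-arc, since two left half-arcs never cross and two right half-arcs never cross. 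Straight from the definition of ``cross'' (with the evident conventions at $\pm\infty$), the left half-arc $(-\infty,p)$ crosses an ordinary arc $(i,j)$ exactly when $i<p<j$, the right half-arc $(q,\infty)$ crosses $(i,j)$ exactly when $i<q<j$, and $(-\infty,p)$ crosses $(q,\infty)$ exactly when $q<p$; and $\depth(v)$ is precisely the number of ordinary arcs passing over $v$. Hence
\[
\inumber(A)=\mathrm{cr}(A)+\sum_{p\ \text{opener}}\depth(p)+\sum_{q\ \text{closer}}\depth(q)+\#\{(p,q):p\ \text{opener},\ q\ \text{closer},\ q<p\}.
\]

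Next I show that the opener contributions telescope. Fix a block $B$ with opener $p=\min B$, of rank $r$ among the $k$ openers, and partition the remaining $k-1$ blocks $D$ according to whether $\max D<p$, or $\min D>p$, or $\min D<p<\max D$. The first class has $\#\{\text{closers}<p\}$ members, the second has $k-r$, and the third has exactly $\depth(p)$, because the blocks with an arc over $p$ are exactly those straddling $p$, each contributing a single such arc. Adding the three sizes yields $\depth(p)+\#\{\text{closers}<p\}=r-1$, and summing over all $k$ openers collapses the second and fourth terms above into $\sum_{r=1}^{k}(r-1)=\binom k2$. Thus
\[
\inumber(A)=\mathrm{cr}(A)+\binom k2+\sum_{q\ \text{closer}}\depth(q).
\]

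The third step rewrites the closer sum. Each vertex that is not a closer is the left endpoint of a unique ordinary arc, a correspondence that is a bijection onto $\arcs(A)$; therefore $\sum_{v\ \text{not a closer}}\depth(v)$ is the number of ordered pairs of arcs $(\alpha,\beta)$ for which the left endpoint of $\alpha$ lies strictly inside $\beta$. The key pointwise identity is that, for any two distinct arcs $\alpha=(v,v')$ and $\beta=(i,j)$,
\[
[\,i<v<j\,]+[\,v<i<v'\,]=[\alpha,\beta\ \text{cross}]+[\alpha,\beta\ \text{nest}],
\]
verified by a short case check over the possible relative orders of the four endpoints, using that two distinct arcs share neither a left endpoint nor a right endpoint. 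Summing over unordered pairs gives $\sum_{v\ \text{not a closer}}\depth(v)=\mathrm{cr}(A)+\mathrm{ne}(A)$, whence $\sum_{q\ \text{closer}}\depth(q)=\sum_{v=1}^{n}\depth(v)-\mathrm{cr}(A)-\mathrm{ne}(A)$, and substituting into the previous display,
\[
\inumber(A)=\binom k2+\sum_{v=1}^{n}\depth(v)-\mathrm{ne}(A).
\]

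Finally, the first sum in \eqref{A:dindex def} equals $\binom n2-\binom k2$, and $\sum_{\alpha\in\arcs(A)}\depth(\alpha)=\mathrm{ne}(A)$, so $\dindex(A)=\binom n2-\binom k2-\sum_{v=1}^{n}\depth(v)+\mathrm{ne}(A)$; adding this to the last display, the four variable terms cancel in pairs and only $\binom n2$ survives, which is the assertion. I expect the real obstacle to be the telescoping in the second step: that is the one point where the block structure of $A$, rather than just its arc diagram, plays a role, and recognizing that the opener-depths together with the left-half/right-half crossings combine through the rank of each opener is the heart of the argument. The sign-free case analysis of the third step is, by comparison, routine once the ``non-closer $\leftrightarrow$ arc'' bijection has been set up.
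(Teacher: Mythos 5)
Your proof is correct, but it follows a genuinely different route from the paper's. The paper works vertex by vertex: for each $v$ with partner $u$ it introduces the partial statistics $\inumber_v(A)$ and $\dindex_v(A)$, shows that each family sums to the corresponding global statistic (a short induction for $\dindex$, Lemma~\ref{lem:total-depth-indices}), and proves the local identity $\dindex_v(A)+\inumber_v(A)=v-1$ by noting that each vertex $i<v$ either lies in $\{1,\dots,u\}$ or starts a generalized arc that is contained in, or crosses, the generalized arc ending at $v$; summing $v-1$ over $v$ gives $\binom{n}{2}$. You instead evaluate $\inumber(A)$ globally by the type of the two arcs in each crossing and reach the closed form $\inumber(A)=\binom{k}{2}+\sum_{v}\depth(v)-\mathrm{ne}(A)$, which complements $\dindex(A)$ term by term. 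I checked your two auxiliary identities --- the telescoping $\depth(p)+\#\{\text{closers}<p\}=r-1$ for the $r$-th opener, and $[\,i<v<j\,]+[\,v<i<v'\,]=[\text{cross}]+[\text{nest}]$, including the boundary case where one arc's right endpoint is another's left endpoint --- and both hold. Your approach has the bonus of expressing the intertwining number through the dimension exponent $\sum_v\depth(v)$ and the nesting number, tying in with the paper's remark on supercharacters; the paper's approach yields the vertexwise refinement $\dindex_v+\inumber_v=v-1$, which is reused in Section~\ref{S:three}. One caveat: in your last step you read the first sum in (\ref{A:dindex def}) as running over the $n-k$ arcs of $A$, so that it equals $\binom{n}{2}-\binom{k}{2}$. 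That is the intended meaning (it is forced by the worked example in Figure~\ref{fig:depth-index-intertwining-number}, by the proof of Lemma~\ref{lem:total-depth-indices}, and by the crossing-index formula in the final remarks), but Definition~\ref{D:intertwining and dindex} as literally printed has upper limit $k$ with $A\in\Pi_{n,k}$, i.e.\ $k$ blocks; under that reading the sum would be $kn-\binom{k+1}{2}$ and your final cancellation would fail. You resolved the ambiguity the right way, but it deserves an explicit sentence in your write-up.
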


For the proof it will be
convenient to refine the intertwining number and
the depth index as follows.
\begin{Definition}\label{D:Partials}
  Let $A$ be a set partition of $\{1,\dots,n\}$, and let
  $v\in\{1,\dots,n\}$.

  The \Dfn{partner} $u$ of an element is $0$ if $v$ is the minimal
  element of its block, otherwise it is the largest element in the
  same block smaller than $v$, that is, $(u,v)$ is an arc in the arc
  diagram.

  The \Dfn{partial intertwining number}, denoted by $\inumber_v(A)$,
  is the number of crossings of the arc (or half-arc) ending in $v$ with
  arcs or half-arcs whose smaller vertex $i$ is between $u$ and $v$.

  The \Dfn{partial depth index}, denoted by $\dindex_v(A)$,
  is the sum of the
  number of (proper) arcs $(i,j)$ with $u < i < j < v$ and the number
  $u$, which is the partner of $v$. 
  
%\todo{you added: \lq is a partner of $v$\rq\, which would make it wrong.  
%We are actually adding the number $u$.}
\end{Definition}

In Figure~\ref{fig:depth-index-intertwining-number}, the partial
intertwining numbers are written on the second line, below the
elements of the set partition.  Since every crossing is counted
precisely once, the sum of these numbers is the intertwining number
of $A$.  The partial depth indices are written below, on the third
line.

It is clear that the sum of partial intertwining numbers is the
intertwining number of the set partition.  The corresponding
statement for the depth index is also true:
\begin{Lemma}\label{lem:total-depth-indices}
  The sum of the partial depth indices of a set partition is equal to
  its depth index.
\end{Lemma}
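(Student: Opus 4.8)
The goal is to show that summing the partial depth indices $\dindex_v(A)$ over all $v\in\{1,\dots,n\}$ recovers the expression in~(\ref{A:dindex def}). I would split $\dindex_v(A)$ into its two pieces: the number $n_v := \#\{(i,j)\in\arcs(A): u<i<j<v\}$ of proper arcs nested strictly between the partner $u$ of $v$ and $v$ itself, and the number $u$ itself (the partner of $v$, with $u=0$ when $v$ is an opener). So I must show
\begin{equation*}
  \sum_{v=1}^n n_v \;+\; \sum_{v=1}^n (\text{partner of }v)
  \;=\;
  \sum_{i=1}^k (n-i) \;-\; \sum_{v=1}^n \depth(v) \;+\; \sum_{\alpha\in\arcs(A)}\depth(\alpha).
\end{equation*}
The strategy is to handle the two sums on the left separately and match them against a regrouping of the three sums on the right.

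**Step 1: the partner sum.** When $v$ runs over all vertices, its partner $u$ is $0$ precisely when $v$ is an opener (there are $k$ openers), and otherwise $(u,v)$ ranges over exactly the arcs of $A$; so $\sum_v(\text{partner of }v)=\sum_{(u,v)\in\arcs(A)} u$. I would then rewrite $\sum_{(u,v)\in\arcs} u$ in a way that produces the term $\sum_i(n-i)$ and a correction. Here the cleanest bookkeeping is: for a fixed vertex $u$, $u$ contributes to the partner sum once for each arc emanating to the right of $u$ as a left endpoint — but each non-closer $u$ is the left endpoint of exactly one arc, so in fact $\sum_{(u,v)\in\arcs} u = \sum_{u\text{ not a closer}} u$. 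Summing $u$ over the $n-k$ non-closers, and comparing with $\sum_{i=1}^k(n-i)=kn-\binom{k+1}{2}$, I would express the discrepancy in terms of positions of openers/closers; the identity $\binom n2 = \sum_{i=1}^k(n-i) + (\text{something})$ should emerge and get absorbed. (This is the part where one has to be careful about which vertices are counted.)

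**Step 2: the nested-arc sum.** For the sum $\sum_v n_v$, fix an arc $\alpha=(i,j)\in\arcs(A)$ and count how many $v$ have $\alpha$ nested strictly inside the interval $(\text{partner of }v,\,v)$. I want to compare this with $\depth(\alpha)$, the number of arcs containing $\alpha$, and with $\depth(v)$, the number of arcs straddling $v$. The key observation is that $\alpha=(i,j)$ satisfies $u<i<j<v$ for the arc $(u,v)$ exactly when $(u,v)$ is one of the arcs counted in $\depth(\alpha)$; and $\alpha$ satisfies $u<i<j<v$ with $u=0$ (i.e. $v$ an opener) exactly when $v$ is an opener with $v>j$ and no arc-left-endpoint of $v$... — so I would carefully separate the contribution of true arcs $(u,v)$ from the contribution of openers, and recognize the true-arc part as $\sum_\alpha\depth(\alpha)$ and the opener part as a sum of $\depth$'s or positions that cancels against $-\sum_v\depth(v)$.

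**Main obstacle and closing.** The genuine work is the combinatorial bookkeeping in Steps 1 and 2: making sure each arc, opener, and closer is counted with the correct multiplicity, and that the "$u=0$" (opener) cases are slotted against the right terms on the right-hand side. I expect the cleanest route is actually to reorganize the right-hand side of~(\ref{A:dindex def}) first — writing $\sum_i(n-i)-\sum_v\depth(v)$ as a single sum over vertices $v$ of (something like) the number of vertices to the right of $v$ minus the number of arcs over $v$, and likewise distributing $\sum_\alpha\depth(\alpha)$ — and then checking vertex-by-vertex (or arc-by-arc) that this matches $\sum_v\dindex_v(A)$. Once the two sides are put into the same "sum over $v$" (or "sum over arcs") shape, the equality should reduce to an elementary identity that one verifies directly. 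An alternative, possibly shorter, proof is by induction on $n$: removing the vertex $n$ (and its incident arc, if any) changes $\dindex$ and each side in a controlled way, and the inductive step reduces to checking the contribution of the new vertex $n$, which is exactly $\dindex_n(A)$ by Definition~\ref{D:Partials}. I would present whichever of these turns out to require less case analysis.
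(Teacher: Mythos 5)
Your proposal is a plan rather than a proof: neither of the two routes you describe is carried to completion, and what is missing is precisely the substance of the argument. In your primary route, the non-opener half of Step~2 is sound --- summing the nested-arc counts over the non-openers $v$ does give $\sum_{\alpha\in\arcs(A)}\depth(\alpha)$, since each arc $\alpha$ is counted once for every arc $(u,v)$ nesting it --- but Step~1 is left at ``the identity should emerge and get absorbed,'' and that is where the real work sits. Beware also that in (\ref{A:dindex def}) the upper limit of the first sum is the number of \emph{arcs} of $A$ (as the paper's worked examples show), not the number of blocks; your comparison of $\sum_{i=1}^k(n-i)$ with a sum over the $n-k$ non-closers conflates the two and would derail the bookkeeping. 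After cancelling $\sum_v\depth(v)=\sum_{(u,v)\in\arcs(A)}(v-u-1)$ against the partner sum, what remains to prove is that $\sum_{j}\bigl((n-j)-(v_j-1)\bigr)$, where $v_1<\dots<v_a$ are the right endpoints of the arcs, equals the number of pairs consisting of an arc and an opener lying to its right; this is true, but it is exactly the careful count you defer.

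The alternative you mention in closing --- induction on $n$ by deleting the last vertex --- is the paper's actual proof, so your instinct is right; but there too you assert the inductive step rather than verify it, and that step is the whole content of the lemma. Concretely, one must check that if $(m,n)$ is an arc and $\nu$ arcs $(i,j)$ satisfy $m<i<j<n$, then passing from $A'$ to $A$ increases the three sums in (\ref{A:dindex def}) by $n-1$, by $n-1-m$, and by $\nu$ respectively, so that $\dindex(A)-\dindex(A')=m+\nu=\dindex_n(A)$, while if $n$ is a singleton the increase is the number of arcs, which again equals $\dindex_n(A)$. Until one of these two computations is actually written down, the proof has a genuine gap.
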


Before we prove this lemma, let us note a second useful fact.

\begin{Lemma}\label{lem:sum-partial-intertwining-depth-index}
  For each $v\in\{1,\dots,n\}$, the sum of $\dindex_v(A)$ and
  $\inumber_v(A)$ equals $v-1$, the total number of vertices before
  $v$.
\end{Lemma}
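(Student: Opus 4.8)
The plan is to fix $v$, let $u$ be its partner, and rewrite both $\dindex_v(A)$ and $\inumber_v(A)$ as counts indexed by the $v-1$ vertices lying before $v$; the lemma then follows by checking that each such vertex is counted exactly once overall.

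First I record the basic structure of the extended arc diagram. Every vertex $i\in\{1,\dots,n\}$ is the left endpoint of exactly one extended arc $\alpha_i$: either a proper arc $(i,j)$ with $j>i$, if $i$ is not a closer, or the right half-arc $(i,\infty)$, if $i$ is a closer. The left half-arcs have no left endpoint in $\{1,\dots,n\}$, and their smaller vertex is $-\infty$. Since the smaller vertex of any proper arc or right half-arc is its left endpoint, the extended arcs whose smaller vertex lies strictly between $u$ and $v$ are precisely the arcs $\alpha_i$ for $i\in\{u+1,\dots,v-1\}$, one for each such $i$. Moreover, the map sending a proper arc to its left endpoint is injective, so the first summand of $\dindex_v(A)$ --- the number of proper arcs $(i,j)$ with $u<i<j<v$ --- equals the number of $i\in\{u+1,\dots,v-1\}$ for which $\alpha_i$ is a proper arc with right endpoint less than $v$. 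The second summand of $\dindex_v(A)$ is $u$, the number of vertices in $\{1,\dots,u\}$.

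The key step is the crossing criterion. Let $\beta$ be the extended arc ending in $v$, so $\beta=(u,v)$ if $u\geq1$ and $\beta=(-\infty,v)$ if $v$ is an opener, in which case $u=0$; write $\beta=(u',v)$. For $i\in\{u+1,\dots,v-1\}$, write $\alpha_i=(i,j)$ with $j\in\{i+1,\dots,n\}\cup\{\infty\}$. Since $i>u\geq u'$, the crossing pattern $i<u'<j<v$ cannot occur, and the pattern $u'<i<v<j$ occurs exactly when $j>v$; note $j\neq v$, for $(i,v)$ being an arc would force $i=u$, contradicting $i>u$. Hence $\alpha_i$ crosses $\beta$ precisely when it is \emph{not} a proper arc $(i,j)$ with $j<v$, and so $\inumber_v(A)$, which counts exactly those $i\in\{u+1,\dots,v-1\}$ with $\alpha_i$ crossing $\beta$, equals $(v-1-u)$ minus the number of $i\in\{u+1,\dots,v-1\}$ with $\alpha_i$ a proper arc with right endpoint less than $v$.

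Writing $N$ for the number of proper arcs $(i,j)$ with $u<i<j<v$, the two preceding paragraphs give $\dindex_v(A)=N+u$ and $\inumber_v(A)=(v-1-u)-N$, whose sum is $v-1$. The only subtle point is the crossing criterion, and specifically handling uniformly the degenerate cases $u=0$, where $\beta$ is a left half-arc, and $j=\infty$, where $\alpha_i$ is a right half-arc; once one observes that the same two inequalities govern all of these cases, the remainder is bookkeeping.
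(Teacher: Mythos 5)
Your proof is correct and takes essentially the same route as the paper's: both arguments partition the $v-1-u$ generalized arcs whose smaller endpoint lies strictly between $u$ and $v$ into those nested under $(u,v)$, which are counted by $\dindex_v(A)$, and those crossing the arc ending at $v$, which are counted by $\inumber_v(A)$, and then add the $u$ coming from the partner term. Your write-up merely makes explicit the bookkeeping (one generalized arc per vertex, and the half-arc and $u=0$ cases) that the paper's one-line proof leaves implicit.
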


Clearly, Theorem~\ref{T:first main result} follows at once from
Lemma~\ref{lem:total-depth-indices} and
Lemma~\ref{lem:sum-partial-intertwining-depth-index}.

\begin{proof}[Proof of~\ref{lem:sum-partial-intertwining-depth-index}]
  Let $u$ be the partner of $v$.  Then any arc $(i,j)$ with
  $u < i < v$ either satisfies $u < i < j < v$, and thus contributes
  $+1$ to the partial depth index, or it contributes precisely one
  crossing to $\inumber_v(A)$.
\end{proof}

\begin{proof}[Proof of~\ref{lem:total-depth-indices}]
  Let $A$ be a set partition of $\{1,\dots,n\}$ and let
  $v\in\{1,\dots,n\}$.
  Let $A'$ be the set partition obtained from $A$ by removing the
  last vertex with label $n$.

  We will now use induction, so, we proceed with the assumption that
  $\dindex(A')=\sum_{v=1}^{n-1}\dindex_v(A')$.  Moreover, by
  definition we have $\dindex_v(A') = \dindex_v(A)$ for $v<n$.
 (Clearly, if $n=1$, then there is nothing to prove.)

  If $n$ is a singleton block of $A$, then
  $\dindex(A)$ is obtained from $\dindex(A')$ by adding the
  number of arcs of $A$.
  Otherwise, we assume that $(m, n)$ is an arc in $A$, and there are
  $\nu$ arcs $(i,j)$ in $A$ with $m < i < j < n$.  Then
  \[
  \dindex(A) = \dindex(A') + (n-1) - (n-1-m) + \nu,
  \]
  since the new arc $(m,n)$ contributes $n-1$ to the first sum in
  the definition of the depth index, and the new arc increases the
  depth of each of the vertices between $m$ and $n$ by $1$.

  In both of these cases,
  $\dindex(A) = \dindex(A') + m + \nu = \dindex(A') +
  \dindex_n(A)$, hence the proof is finished.
\end{proof}

\section{The intertwining number and the rank control matrix}\label{S:three}

We start with setting up our notation.
\begin{Definition}
  For an $n\times n$ matrix $X$, let $X_{k,\ell}$
  denote the lower-left $k\times\ell$ submatrix of $X$.
  Then the \Dfn{rank control matrix} $R(X)=(r_{k,\ell})_{k,\ell=1}^n$
  is the $n\times n$ matrix with entries defined by
  $r_{k,\ell}:=rank\left(X_{k,\ell}\right)$.
\end{Definition}

As far as we know, the rank control matrix $R(X)$ 
was introduced by Melnikov in~\cite{M}.
A closely related
version is used in~\cite{P} and in~\cite{MillerSturmfels}
for describing the Bruhat-Chevalley order on symmetric groups.
Incitti~\cite{I} used it in his study of the Bruhat order on involutions.
After Incitti's work, the rank control matrix
is used in~\cite{BC},~\cite{C},~\cite{CT},~\cite{CCT}
for studying Bruhat orders on partial involutions and partial fixed-point-free involutions.
See~\cite{CR} for related work on the rook monoid.

Next, we introduce the ``inequalities statistic'' of an
arc-diagram.

\begin{Definition}
Let $A$ be a set partition from $\Pi_n$.
We denote by $M(A)=(m_{i,j})_{i,j=1}^n$
the $n\times n$ matrix defined by
$$
m_{i,j} =
\begin{cases}
1 & \text{ if $(i,j)$ is an arc in $A$;}\\
0 & \text{ otherwise.}
\end{cases}
$$
In this notation, the \Dfn{inequalities statistic} of $A$,
denoted by $\Dstat (A)$, is defined by
\[
\Dstat(A)=\left|\left\{(i,j)\,|\,2\leq i\leq n,\, 1\leq j\leq n-1\text{ and }
r_{i,j}\neq r_{i-1,j+1}\right\}\right|\,,
\]
Here, $r_{i,j}$'s are the entries of
rank control matrix $R(M(A)) = (r_{i,j})_{i,j=1}^n$.
\end{Definition}
In other words, $M(A)$ is the adjacency matrix
of $A$, regarded as the directed graph with
edges directed towards the vertices with bigger
labels.  The statistic $\Dstat(A)$ is
the total number of inequalities
along antidiagonals from south-west to north-east
of the rank control
matrix $R(M(A))$.

\begin{Proposition}\label{prop:explain-r}
  Let $A$ be a set partition of $\{1,\dots,n\}$ and
  let $(r_{i,j})_{i,j=1}^n$ denote the rank control
  matrix of $M(A)$.
  In this case, the entry $r_{i,j}$ ($i,j\in \{1,\dots, n\}$)
  is equal to the number of
  arcs $(k,l)$ in $A$ such that $i\leq k < l\leq j$.
\end{Proposition}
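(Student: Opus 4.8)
The plan is to unwind the definition of the rank control matrix directly, using the very special structure of the matrix $M(A)$: it is the adjacency matrix of an arc diagram, so it has at most one $1$ in each row and each column. Recall that $r_{i,j}$ is the rank of the lower-left $i\times j$ submatrix $X_{i,j}$ of $M(A)$, which consists of the rows indexed by $\{1,\dots,i\}$ — wait, one must be careful: ``lower-left $k\times\ell$ submatrix'' means rows $n-k+1,\dots,n$ and columns $1,\dots,\ell$. So I first need to pin down the indexing convention from the surrounding text (the entry $m_{i,j}=1$ iff $(i,j)$ is an arc, with $i<j$; the definition of $\Dstat$ compares $r_{i,j}$ with $r_{i-1,j+1}$ along antidiagonals). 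Given the statement to be proved — $r_{i,j}$ counts arcs $(k,l)$ with $i\le k<l\le j$ — the submatrix $X_{i,j}$ in question must be the one picking out precisely the rows $\{i,i+1,\dots,n\}$ and columns $\{1,\dots,j\}$... but that would count arcs with source $\ge i$ and target $\le j$, i.e. arcs contained in $[i,j]$, which matches. So ``lower-left $k\times\ell$'' with $k=n-i+1$, $\ell=j$ gives exactly rows $\ge i$, columns $\le j$. I would open the proof by making this identification explicit: $X$ where $r_{i,j}=\operatorname{rank}$ is the submatrix of $M(A)$ on rows $\{i,\dots,n\}$ and columns $\{1,\dots,j\}$, whose nonzero entries sit exactly at positions $(k,l)$ with $i\le k\le n$, $1\le l\le j$, and $(k,l)\in\arcs(A)$.

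The key step is then a rank computation for a $0/1$ matrix with at most one $1$ per row and per column — a partial permutation matrix. For such a matrix the rank is simply the number of $1$'s: the nonzero rows are linearly independent because each has its unique $1$ in a column where all other rows vanish (distinct rows have their $1$'s in distinct columns). Hence $\operatorname{rank}(X_{i,j})$ equals the number of $1$'s in $X_{i,j}$, which is the number of arcs $(k,l)$ with $i\le k$ and $l\le j$; since arcs always satisfy $k<l$, this is precisely $\#\{(k,l)\in\arcs(A): i\le k<l\le j\}$. That is the claim.

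So the proof is essentially two observations strung together: (1) restricting $M(A)$ to a ``trailing rows, leading columns'' submatrix still yields a matrix with at most one $1$ per row and column, and the surviving $1$'s are exactly the arcs contained in the interval $[i,j]$; (2) the rank of a $0/1$ matrix with at most one $1$ per row and column equals its number of $1$'s. The main (and really only) obstacle is a presentational one: getting the indexing conventions for ``lower-left $k\times\ell$ submatrix'' to line up correctly with the statement, so that row-index $i$ in $r_{i,j}$ corresponds to keeping rows $i$ through $n$. Once that bookkeeping is fixed, the argument is immediate. I would write it as follows:

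\begin{proof}[Proof of Proposition~\ref{prop:explain-r}]
  By definition, $r_{i,j}$ is the rank of the lower-left submatrix of
  $M(A)$ obtained by keeping the rows indexed by $\{i,i+1,\dots,n\}$
  and the columns indexed by $\{1,2,\dots,j\}$; call this submatrix
  $Y$.  Its nonzero entries are exactly those $m_{k,l}=1$ with
  $i\leq k\leq n$ and $1\leq l\leq j$, that is, the arcs $(k,l)$ of
  $A$ with $i\leq k$ and $l\leq j$.  Since every arc $(k,l)$ satisfies
  $k<l$, these are precisely the arcs $(k,l)$ with $i\leq k<l\leq j$.

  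It remains to observe that the rank of $Y$ equals the number of its
  nonzero entries.  Indeed, $M(A)$ has at most one $1$ in each row and
  in each column, hence so does $Y$.  The nonzero rows of $Y$ are
  therefore linearly independent: each such row has its unique $1$ in
  a column in which every other row of $Y$ vanishes.  Consequently
  $\operatorname{rank}(Y)$ equals the number of nonzero rows of $Y$,
  which is the number of $1$'s in $Y$, namely
  $\left|\{(k,l)\in\arcs(A)\,:\,i\leq k<l\leq j\}\right|$.
\end{proof}
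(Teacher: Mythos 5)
Your proof is correct, and it supplies exactly the argument the paper chooses to omit: the authors state that the proposition ``follows from the definition of rank control matrix'' and give only a worked example, so there is no official proof to compare against. Your two observations --- that the relevant submatrix of $M(A)$ is again a partial permutation matrix whose $1$'s are the arcs contained in $[i,j]$, and that the rank of a matrix with at most one $1$ per row and column equals its number of $1$'s --- are precisely the content being waved at. You were also right to worry about the indexing: read literally, the ``lower-left $k\times\ell$ submatrix'' of the paper's Definition would consist of rows $n-k+1,\dots,n$, under which the proposition as stated would be false; the convention you adopt (rows $i,\dots,n$, columns $1,\dots,j$) is the one consistent with both the proposition and the paper's own example of $R(M(A))$, so making that identification explicit at the start of the proof, as you do, is the correct move.
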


The proof of Proposition~\ref{prop:explain-r} follows from
the definition of rank control matrix of $M(A)$, so we omit
writing it. Nevertheless, we give an example that explains it.

\begin{Example}
For example, let $A$ denote the arc-diagram
\begin{center}
%\begin{figure}[htp]
\begin{tikzpicture}[scale=1]
\node[below=.05cm] at (0,0) {$1$};
\node[draw,circle, inner sep=0pt, minimum width=4pt, fill=black] (0) at (0,0) {};
\node[below=.05cm] at (1,0) {$2$};
\node[draw,circle, inner sep=0pt, minimum width=4pt, fill=black] (1) at (1,0) {};
\node[below=.05cm] at (2,0) {$3$};
\node[draw,circle, inner sep=0pt, minimum width=4pt, fill=black] (2) at (2,0) {};
\node[below=.05cm] at (3,0) {$4$};
\node[draw,circle, inner sep=0pt, minimum width=4pt, fill=black] (3) at (3,0) {};
\draw[color=black] (0) to [out=60,in=120] (1);
\draw[color=black] (1) to [out=60,in=120] (2);
\end{tikzpicture}
%\end{figure}
\end{center}
Then $M(A)= \left[\begin{matrix}
0 &1 &0 &0\\
0 &0 &1 &0\\
0 &0 &0 &0\\
0 &0 &0 &0\end{matrix}\right]$ and $R(M(A))=\left[\begin{matrix}
0 &1 &2 &2\\
0 &0 &1 &1\\
0 &0 &0 &0\\
0 &0 &0 &0\end{matrix}\right]$.
The antidiagonals from south-west to north-east in he matrix $R(M(A))$ are:
$(0,1)$, $(0,0,2)$, $(0,0,1,2)$, $(0,0,1)$ and $(0,0)$.
Note that we skipped the initial and the last antidiagonal sequences
since they do not have any inequalities, hence they do not
contribute to our statistic.
Now, we have two inequalities in the third diagonal
$(0,0,1,2)$ ($0\neq 1$ and $1\neq 2$)
and one inequality in the first, second and fourth diagonals.
Thus, for this arc-diagram $A$ we have $\mathtt D(A)=5$.

Let us point out that the depth index of $A$ is equal to five as well:
$$
\mathtt t(A)=3+2-(0+0+0+0)+(0+0)=5\,.
$$
This is not a coincidence as we will show in our next result.
\end{Example}

\begin{Theorem}\label{T:rank control}
Let $A$ be an arc-diagram from $\Pi_n$.
By viewing $A$ as a directed graph
we let $R=(r_{i,j})_{i,j=1}^n$
denote the rank-control matrix of the
adjacency matrix of $A$.
In this case, the following relations hold true:
\begin{enumerate}
\item $\dindex (A) = \mathtt{D}(A)$, where
  \[
    \Dstat(A)=\left|\left\{(i,j)\,|\,2\leq i\leq n,\, 1\leq
        j\leq n-1\text{ and } r_{i,j}\neq
        r_{i-1,j+1}\right\}\right|\,;
  \]
\item $\inumber(A) = \mathtt{E}(A)$, where
\[
\mathtt E(A)=\left|\left\{(i,j)\,|\,2\leq i\leq j
+1\leq n\quad\textrm{and}\quad r_{i,j}
=r_{i-1,j+1}\right\}\right|\,,
\]
\end{enumerate}
\end{Theorem}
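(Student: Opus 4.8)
The plan is to establish part (2) first, by a direct combinatorial bijection, and then to deduce part (1) from part (2), the elementary identity $\Dstat(A)+\mathtt E(A)=\binom{n}{2}$, and Theorem~\ref{T:first main result}. The common input is Proposition~\ref{prop:explain-r}, which identifies $r_{i,j}$ with the number of arcs $(k,l)$ of $A$ satisfying $i\le k<l\le j$. From this, the set counted by $r_{i,j}$ is contained in the one counted by $r_{i-1,j+1}$, so $r_{i,j}\le r_{i-1,j+1}$ always, and the difference $r_{i-1,j+1}-r_{i,j}$ equals the number of arcs $(k,l)$ with $i-1\le k<l\le j+1$ such that $k=i-1$ or $l=j+1$. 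Because in an arc diagram each vertex is the left endpoint of at most one arc and the right endpoint of at most one arc, this yields a clean description: setting $p=i-1$ and $q=j+1$, we have $r_{i,j}=r_{i-1,j+1}$ precisely when $p$ is not the left endpoint of an arc of $A$ whose right endpoint is $\le q$, and $q$ is not the right endpoint of an arc of $A$ whose left endpoint is $\ge p$.

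For part (2) I would note that $\mathtt E(A)$ is, after the substitution $(i,j)\mapsto(p,q)=(i-1,j+1)$, the number of pairs $(p,q)$ with $1\le p<q\le n$ satisfying the equality condition above, and that $\inumber(A)$ equals the number of two-element sets $\{b,c\}$ whose members lie in distinct blocks of $A$ with no element of $\mathrm{block}(b)\cup\mathrm{block}(c)$ strictly between $b$ and $c$ (this is Definition~\ref{D:intertwining and dindex} after reorganizing its double sum over pairs of blocks). I would then check that $\{b,c\}\mapsto(p,q)=(\min(b,c),\max(b,c))$ is a bijection between these two sets. In one direction: ``no element of $\mathrm{block}(p)$ strictly between $p$ and $q$'' means the successor of $p$ inside its block, if it exists, is at least $q$; since $p$ and $q$ lie in different blocks it cannot equal $q$, so this is the same as ``$p$ has no successor in its block, or its successor exceeds $q$'', i.e.\ ``$p$ is not the left endpoint of an arc ending at or before $q$'', and symmetrically for $q$ and $\mathrm{block}(q)$. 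In the other direction, the equality condition on $(p,q)$ already forces $p$ and $q$ into distinct blocks: otherwise the chain of arcs joining $p$ to $q$ inside their common block would produce an arc violating one of the two clauses. Hence $\inumber(A)=\mathtt E(A)$.

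For part (1), consider the index set $\{(i,j):2\le i\le n,\ 1\le j\le n-1\}$ over which $\Dstat(A)$ is computed. If $i\ge j+2$ then no arc $(k,l)$ can satisfy $i-1\le k<l\le j+1$, so $r_{i,j}=r_{i-1,j+1}=0$ and such a pair never contributes to $\Dstat(A)$. Consequently $\Dstat(A)$ counts exactly the pairs with $2\le i\le j+1\le n$ and $r_{i,j}\neq r_{i-1,j+1}$, while $\mathtt E(A)$ counts exactly those with $2\le i\le j+1\le n$ and $r_{i,j}=r_{i-1,j+1}$; since these two families partition $\{(i,j):2\le i\le j+1\le n\}$, whose size is $\sum_{j=1}^{n-1}j=\binom{n}{2}$, we obtain $\Dstat(A)+\mathtt E(A)=\binom{n}{2}$. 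Combining this with Theorem~\ref{T:first main result}, which says $\dindex(A)+\inumber(A)=\binom{n}{2}$, and with part (2), gives $\dindex(A)=\binom{n}{2}-\inumber(A)=\binom{n}{2}-\mathtt E(A)=\Dstat(A)$, which is part (1).

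I expect the only delicate point to be the boundary bookkeeping in the bijection of part (2): treating block minima (openers) and block maxima (closers), for which one of the two clauses is automatically satisfied, and making sure the observation ``the successor of $p$ cannot be $q$'' — which relies on $p$ and $q$ lying in distinct blocks — is inserted at the correct place in each direction of the correspondence. Once Proposition~\ref{prop:explain-r} is available, the remaining steps are routine.
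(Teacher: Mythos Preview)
Your argument is correct, but it runs in the opposite order from the paper's. The paper first proves part~(1) directly (Proposition~\ref{Dt}): it refines $\Dstat(A)$ column by column, setting $\Dstat_j(A)=|\{i: r_{i,j}\ne r_{i+1,j-1}\}|$, and checks $\Dstat_j(A)=\dindex_j(A)$ by a short case analysis on whether column $j$ of $M(A)$ contains a $1$, invoking Lemma~\ref{lem:total-depth-indices} to sum. It then deduces part~(2) (Proposition~\ref{Et}) from part~(1), Theorem~\ref{T:first main result}, and the same complementarity $\Dstat(A)+\mathtt E(A)=\binom{n}{2}$ that you use to go the other way.

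Your direct attack on part~(2), matching each equality pair $(p,q)$ with an intertwining pair $\{b,c\}$ via Proposition~\ref{prop:explain-r}, is new relative to the paper and gives a one-to-one combinatorial meaning to the individual equalities in the rank-control matrix; it also avoids the partial depth index machinery of Definition~\ref{D:Partials}. The paper's route instead reuses that machinery, which was already in place for Theorem~\ref{T:first main result}. Both approaches are of comparable length once Proposition~\ref{prop:explain-r} is granted. One small presentational point: your ``one direction'' paragraph is really an if-and-only-if (conditional on $p,q$ lying in different blocks), and you use both halves of it---the forward half to show intertwining pairs map into equality pairs, and the backward half, after the ``other direction'' has supplied the different-blocks hypothesis, to show the map is onto. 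It would read more smoothly if you said so explicitly.
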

We divide the proof into two propositions.

\begin{Proposition}\label{Dt}
  Let $A$ be a set partition. Then
  \[
    \dindex(A)=\Dstat(A)\,.
  \]
\end{Proposition}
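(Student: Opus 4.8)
The plan is to count, for each column index $j$, the contribution of the $j$-th south-west to north-east antidiagonal of $R=R(M(A))$ to $\Dstat(A)$, and to match it term-by-term with the partial depth index $\dindex_{?}(A)$ or, more directly, with the summands in the definition~\eqref{A:dindex def} of $\dindex(A)$. The key input is Proposition~\ref{prop:explain-r}: $r_{i,j}$ equals the number of arcs $(k,l)$ with $i\le k<l\le j$. Fix a pair $(i,j)$ with $2\le i\le n$, $1\le j\le n-1$; then $r_{i,j}\ne r_{i-1,j+1}$ precisely when passing from the window $[i,j]$ to the enlarged window $[i-1,j+1]$ strictly increases the number of fully-contained arcs, i.e. when there is at least one arc with left endpoint $i-1$ and right endpoint in $[i,j+1]$, or left endpoint in $[i-1,j]$ and right endpoint $j+1$. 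Since $M(A)$ is a partial permutation matrix (each vertex is the left endpoint of at most one arc and the right endpoint of at most one arc), for a given antidiagonal there is a clean description of which cells are strict inequalities, and I would make this precise first.

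Next I would reorganize the count of strict inequalities by \emph{which arc is responsible}. An arc $\alpha=(u,w)$ of $A$ "newly appears" in the window $[i-1,j+1]$ but not in $[i,j]$ exactly when ($u=i-1$ and $w\le j+1$) or ($w=j+1$ and $u\ge i-1$). Summing over the antidiagonal indexed so that $i-1$ and $j+1$ range suitably, each arc $\alpha=(u,w)$ contributes inequalities on the antidiagonals "between" its endpoints; a short computation shows $\alpha$ is responsible for exactly $(w-u-1) - \depth(\alpha) + (\text{something counting arcs nested inside or crossing})$ — here I expect the bookkeeping to land on: arc $(u,w)$ accounts for $w-u-1-(\text{number of arcs }(k,l)\text{ with }u<k<l<w)$ inequalities of the first type plus, for the singleton/closer vertices, the $\sum_i(n-i)$ term. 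In parallel, vertices that are not right-endpoints of any arc (the openers, and more generally each vertex) contribute to $\Dstat$ along the antidiagonal through them an amount equal to $1$ plus its depth, or similar, matching $\sum_{i=1}^k(n-i)-\sum_v\depth(v)$. I would assemble these to recover exactly the three sums in~\eqref{A:dindex def}.

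Rather than the global count, the cleaner route — and the one I would actually write — is induction on $n$, mirroring the proof of Lemma~\ref{lem:total-depth-indices}. Let $A'$ be $A$ with vertex $n$ deleted. Deleting column $n$ and row $n$ from $M(A)$ removes the last antidiagonal (which never contributes, as noted in the Example) and, crucially, the entries $r_{i,j}$ for $i,j\le n-1$ are unchanged except along the cells where an arc into $n$ created a rank jump. If $n$ is a singleton, no arc ends at $n$, and one checks $\Dstat(A)-\Dstat(A') = (\text{number of arcs of }A)$, matching the increment of $\dindex$ in that case. If $(m,n)$ is an arc with $\nu$ arcs nested strictly inside it, the new row/column of $R$ forces exactly $m+\nu$ new strict inequalities along the relevant antidiagonals — the arc $(m,n)$ raises $r_{i,j}$ for the cells "below and left" of position $(m,n)$, and the count of boundary inequalities it creates is $m$ (from the left endpoint sweeping to column $1$) plus $\nu$ (from the nested arcs whose windows now get one extra containing arc at the boundary). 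This equals $\dindex_n(A)$ by Definition~\ref{D:Partials}, so $\Dstat(A)=\Dstat(A')+\dindex_n(A)=\dindex(A')+\dindex_n(A)=\dindex(A)$ by the induction hypothesis and Lemma~\ref{lem:total-depth-indices}.

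The main obstacle I anticipate is the precise verification that inserting the $n$-th row and column changes the rank-control matrix in \emph{exactly} $m+\nu$ off-by-one-antidiagonal cells, no more and no less — one must rule out "cascading" changes in other antidiagonals and confirm that each responsible arc or the left-endpoint sweep contributes a single new inequality rather than several. This is where the partial-permutation-matrix structure of $M(A)$ (at most one $1$ per row and column) does the heavy lifting, bounding how $r_{i,j}$ can change between adjacent cells by $1$, so I would isolate that monotonicity/Lipschitz fact as a small lemma before running the induction.
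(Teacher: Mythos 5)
Your workable route is the induction in your third paragraph, and it does go through: the counting claim at its heart (removing vertex $n$ changes $\Dstat$ by the number of arcs when $n$ is a singleton, and by $m+\nu$ when $(m,n)$ is an arc with $\nu$ arcs nested inside it) is correct and equals $\dindex_n(A)$ as required. This is essentially the paper's proof in inductive clothing: the paper fixes an arbitrary column $j$, shows directly that the number of antidiagonal inequalities whose upper\nobreakdash-right cell lies in column $j$ equals the partial depth index $\dindex_j(A)$ of Definition~\ref{D:Partials} (splitting into the cases where $j$ has partner $0$ or partner $w$, the latter giving $w$ inequalities from the positions $1\le i\le w$ plus one for each arc $(u,v)$ with $w<u<v<j$), and then sums over $j$ via Lemma~\ref{lem:total-depth-indices}. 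Your inductive step is exactly this computation for $j=n$. Two remarks. First, the ``main obstacle'' you anticipate --- cascading changes to the rank-control matrix --- is not actually an issue and needs no separate Lipschitz lemma: by Proposition~\ref{prop:explain-r}, $r_{i,j}$ counts the arcs contained in the window $[i,j]$, so every entry with $i,j\le n-1$ is literally unchanged upon deleting vertex $n$, and the only new comparisons are those involving column $n$ of $R$; the count of these is then immediate from the fact that each vertex is the left endpoint of at most one arc and the right endpoint of at most one arc. Second, your first two paragraphs (the global accounting by ``responsible arc'') are left incomplete and are not needed; in a final write-up you should drop them and carry out the column-$n$ count explicitly, since as stated the proposal asserts rather than verifies the $m+\nu$ figure. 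With that verification written out, your argument and the paper's are the same proof organized differently: theirs avoids a second induction by treating all columns uniformly, while yours reuses the recursion already established for $\dindex$ in Lemma~\ref{lem:total-depth-indices}.
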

\begin{proof} For a fixed $j$ with $2\leq j\leq n$, we set
$\Dstat_j(A)=\left|\left\{i\,|\,2\leq i\leq n,\,\text{ and }r_{i,j}\neq r_{i+1,j-1}\right\}\right|$.
We will prove that $\dindex_j(A)=\Dstat_j(A)$ for every $j$, $2\leq j\leq n$,
where $\dindex_j(A)$ is the partial depth index defined as in Definition~\ref{D:Partials}.

If the vertex $j$ is not the endpoint of any arc, hence, the partner of $j$ is $0$,
or, equivalently, the $j$-th column of the matrix $M(A)$ consists of 0's only, then $t_j(A)$
is the number of arcs $(u,v)$ with $u<v<j$. In this case,
$\Dstat_j(A)$ equals the number of $1$'s to the left of the $j$-th column in the matrix $M(A)$.
Each such 1 in $M(A)$ corresponds to an arc in $A$, therefore, $\dindex_j(A)=\Dstat_j(A)$.

If the $j$-th column of $M(A)$ has a $1$ at the position $(w,j)$, then $(w, j)$ is an arc in $A$
for some $w$ ($w<j$) and furthermore there are $w$ inequalities of the form $r_{i,j}> r_{i+1,j-1}$
for each $i$ such that $1\leq i\leq w$.
Also, it is easily seen that each arc $(u,v)$, where $w<u<v<j$, contributes an
inequality of the form $r_{u,j}> r_{u+1,j-1}$.
Therefore, $\Dstat_j(A)=w+\left|\left\{(u,v)\,|\,w<u<v<j\,,\,m_{u,v}=1\right\}\right|$.
Since $w$ is the partner of the vertex $j$, we see that
$\dindex_j(A)=w+\left|\left\{(u,v)\,|\,w<u<v<j\,,\,(u,v)\,\,\text{is an arc in }\,A\right\}\right|$.
Therefore, in this case we have $\dindex_j(A)=\Dstat_j(A)$ also.

Now, since by Lemma~\ref{lem:total-depth-indices},
$\dindex(A)=\sum_{j=2}^n \dindex_j(A)$, and $\Dstat(A)=\sum_{j=2}^n \Dstat_j(A)$,
the proof is finished.
\end{proof}

By combining Theorem~\ref{T:first main result} and
Proposition~\ref{Dt} we
are able to express
the intertwining number in terms of the number of equalities in
anti-diagonals of the rank control matrix.
To this end, if $A$ is an arc-diagram from $\Pi_n$, then
let us denote by $\mathtt E(A)$ the following statistic:
$$
\mathtt E(A)=\left|\left\{(i,j)\,|\,2\leq i\leq j+1\leq
n\quad\textrm{and}\quad r_{i,j}=r_{i-1,j+1}\right\}\right|\,,
$$
where, as before, $r_{i,j}$'s are the entries of the rank control matrix
$R(M(A))$.

\begin{Proposition}\label{Et}
Let $A$ be an arc-diagram. Then
$$
\mathtt i(A)=\mathtt E(A)\,.
$$
\end{Proposition}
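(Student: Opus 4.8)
The plan is to prove Proposition~\ref{Et} by a counting argument on antidiagonals, parallel to the proof of Proposition~\ref{Dt}, and then to reconcile the count with $\binom n2$ using Theorem~\ref{T:first main result}. First I would observe that the index set
$\{(i,j)\,:\,2\le i\le j+1\le n\}$ ranges precisely over the positions strictly below the main antidiagonal of an $n\times n$ matrix together with the main antidiagonal itself; concretely, for each fixed $j$ with $1\le j\le n-1$, the admissible $i$ run from $2$ to $j+1$, giving exactly $j$ positions, so the total size of the index set is $\sum_{j=1}^{n-1} j=\binom n2$. Every such position $(i,j)$ contributes either an equality $r_{i,j}=r_{i-1,j+1}$, counted by $\mathtt E(A)$, or a strict inequality $r_{i,j}\ne r_{i-1,j+1}$. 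Thus it suffices to show that the positions $(i,j)$ in this restricted range with $r_{i,j}\ne r_{i-1,j+1}$ are in bijection with the positions counted by $\Dstat(A)$, i.e. that the inequalities all occur in the lower-triangular (below-and-on-the-antidiagonal) region and none above it.

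The key step is therefore to verify that for $i>j+1$ (strictly above the main antidiagonal), one always has $r_{i,j}=r_{i-1,j+1}$, so that the full count $\Dstat(A)$ of antidiagonal inequalities coincides with the count of inequalities inside the range $2\le i\le j+1\le n$. This follows from Proposition~\ref{prop:explain-r}: $r_{i,j}$ counts arcs $(k,\ell)$ of $A$ with $i\le k<\ell\le j$, and if $i>j$ there are simply no such arcs, so $r_{i,j}=0$ in that whole region and all the relevant antidiagonal comparisons give $0=0$. Once this is in hand, we get
\[
\Dstat(A) + \mathtt E(A) = \binom n2,
\]
because $\Dstat(A)$ counts the strict-inequality positions and $\mathtt E(A)$ the equality positions among the $\binom n2$ positions with $2\le i\le j+1\le n$ (after discarding the above-antidiagonal region which contributes nothing to $\Dstat$). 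Combining this with Proposition~\ref{Dt}, which gives $\dindex(A)=\Dstat(A)$, and with Theorem~\ref{T:first main result}, which gives $\dindex(A)+\inumber(A)=\binom n2$, we conclude $\mathtt E(A)=\binom n2-\Dstat(A)=\binom n2-\dindex(A)=\inumber(A)$, as desired.

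The main obstacle I anticipate is bookkeeping with the index ranges: the statistic $\Dstat(A)$ in the original definition is indexed by $2\le i\le n$, $1\le j\le n-1$ with the comparison $r_{i,j}$ vs $r_{i-1,j+1}$, and one must check carefully that the positions lying strictly above the main antidiagonal — which are in the $\Dstat$ index set but outside the $\mathtt E$ index set — genuinely contribute zero inequalities, so that no overcounting or undercounting sneaks in at the boundary. Handling the corner cases (the topmost antidiagonal, the case where a column of $M(A)$ is entirely zero, and small $n$) is routine but needs to be stated cleanly. Apart from that, everything reduces to the elementary fact that $r_{i,j}=0$ whenever $i>j$, which is immediate from Proposition~\ref{prop:explain-r}.
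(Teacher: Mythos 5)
Your proof is correct and follows essentially the same route as the paper: split the $\binom{n}{2}$ positions $(i,j)$ with $2\le i\le j+1\le n$ into equalities (counted by $\mathtt E(A)$) and inequalities (which account for all of $\Dstat(A)$, since positions with $i>j+1$ give $0=0$ and contribute nothing), then combine Proposition~\ref{Dt} with Theorem~\ref{T:first main result}. If anything, your handling of the boundary antidiagonal $i=j+1$ is more careful than the paper's own argument, which asserts equality for all $i>j$ even though $r_{j+1,j}=0$ can differ from $r_{j,j+1}$ when $(j,j+1)$ is an arc.
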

\begin{proof} Let $n$ be the number of vertices of $A$.
By Theorem~\ref{T:first main result},
we have $\mathtt i(A)={n\choose 2}-\mathtt t(A)$.
Using Theorem~\ref{Dt} we now have $\mathtt i(A)={n\choose 2}-\mathtt D(A)$.
To finish the proof we will show that
$\mathtt E(A)={n\choose 2}-\mathtt D(A)$.

  Now, notice that $i>j$ implies $r_{i,j} = r_{i-1,j+1}$
       since the adjacency matrix $M(A)$
       of $A$ is strictly upper triangular.
       (The edges are directed towards
       vertices with bigger indices.)
       Therefore, the pairs $(i,j)$ with
       $i > j$ do not contribute to $\Dstat(A)$.
       By definition, pairs $(i,j)$ with
       $i > j$ do not contribute to
       $\mathtt E(A)$ either.
       By arguing
       in a similar manner we see that
       a pair $(i,j)$ with $2 \leq i \leq j \leq n$
       contributes either to $\mathtt E(A)$
       or to $\mathtt D(A)$ but not to both. Clearly, the number of such pairs
       equals ${n \choose 2}$.
       This shows that $\mathtt E(A) + \mathtt D(A) = {n \choose 2}$,
       and the proof is finished.

\begin{comment}
Now, notice that if $i >j$, then
we have $r_{i,j} = r_{i-1,j+1}$
since the adjacency matrix $M(A)$
of $A$ is strictly upper triangular.
(The edges are directed towards
vertices with bigger indices.)
Therefore, the pairs $(i,j)$ with
$i > j$ do not contribute to $\Dstat(A)$.
At the same time, looking at the definition
of $\mathtt E(A)$, we see a pair $(i,j)$ with
$i > j$ does not contribute to
$\mathtt E(A)$ too. By arguing
in a similar manner we see that
a pair $(i,j)$ with $2 \leq i \leq j \leq n$
contributes either to $\mathtt E(A)$
or to $\mathtt D(A)$ but not to both of them
at the same time. Clearly, the number of such pairs
of integers is equal to ${n \choose 2}$.
This shows that $\mathtt E(A) + \mathtt D(A) = {n \choose 2}$,
and the proof is finished.
\end{comment}

\end{proof}

\begin{proof}[Proof of Theorem~\ref{T:rank control}]
This is a combination of Propositions~\ref{Dt} and~\ref{Et}.
\end{proof}

Before proceeding to the next section, we
briefly discuss the algebraic geometric
significance of the equalities
$\mathtt t(A)=\mathtt D(A)$ and
$\mathtt i(A)=\mathtt E(A)$.

\vspace{.5cm}

We already mentioned that the doubled
Borel group $\mb{B}_n\times \mb{B}_n$
acts on matrices via
\begin{align}\label{A:Action on matrices}
(B_1,B_2) \cdot X = B_1 X B_2^{-1} \ \text{ for } (B_1,B_2) \in \mb{B}_n\times \mb{B}_n,\
X\in \mt{Mat}_n,
\end{align}
and the action (\ref{A:Action on matrices}) restricts to
give an action on $\BM{n}$.
%We also mentioned that the nilpotent matrices in $\BM{n}$ is isomorphic to
%$\BM{n-1}$ as an algebraic monoid.

\begin{Proposition}\label{orbitinvariant}
Let $X$ and $Y$ be two matrices from $\mt{Mat}_n$
such that $Y=B XC$, where $B$ and $C$ are from
$\mb{B}_n$. If $X_{k\ell}$ and
$Y_{k\ell}$ denotes the lower-left $k\times\ell$ submatrices of $X$ and $Y$,
respectively, then
$$
rank\left(X_{k\ell}\right)=rank\left(Y_{k\ell}\right)\, \text{ for all } 1\leq k,\ell\leq n.
$$
 \end{Proposition}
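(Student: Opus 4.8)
The plan is to prove the rank invariance directly from the definition of rank, exploiting the upper-triangular structure of $B$ and $C$. The key observation is that the lower-left $k\times\ell$ submatrix $X_{k\ell}$ consists of the last $k$ rows and the first $\ell$ columns of $X$. I would first reduce the two-sided statement to two one-sided statements, treating left multiplication by $B$ and right multiplication by $C$ separately; since $(B,C)$ acts as $X \mapsto B X C$ (or $BXC^{-1}$, but $C^{-1}$ is again upper triangular, so this does not matter), it suffices to show $\operatorname{rank}\bigl((BX)_{k\ell}\bigr) = \operatorname{rank}(X_{k\ell})$ for $B$ upper triangular invertible, and the analogous statement on the right.

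For the left-multiplication case, I would note that the last $k$ rows of $BX$ are obtained from the rows of $X$ by a linear transformation governed by the lower-right $k\times k$ block of $B$: because $B$ is upper triangular, row $i$ of $BX$ (for $i$ among the last $k$ indices, i.e.\ $i \ge n-k+1$) is a linear combination only of rows $i, i+1, \dots, n$ of $X$, all of which are among the last $k$ rows. Hence the last $k$ rows of $BX$ lie in the row span of the last $k$ rows of $X$, and since the relevant block of $B$ is itself invertible (being a diagonal block of an invertible triangular matrix), the row spans are in fact equal. Restricting to the first $\ell$ columns preserves this equality of row spaces, so $\operatorname{rank}\bigl((BX)_{k\ell}\bigr) = \operatorname{rank}(X_{k\ell})$. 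The right-multiplication case is dual: because $C$ is upper triangular, the first $\ell$ columns of $XC$ are linear combinations of the first $\ell$ columns of $X$, with the transformation given by the invertible upper-left $\ell\times\ell$ block of $C$, so the column spans of $X_{k\ell}$ and $(XC)_{k\ell}$ coincide.

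Combining the two one-sided results gives the claim: $\operatorname{rank}(Y_{k\ell}) = \operatorname{rank}\bigl((BXC)_{k\ell}\bigr) = \operatorname{rank}\bigl((XC)_{k\ell}\bigr) = \operatorname{rank}(X_{k\ell})$. I do not expect a genuine obstacle here; the only point requiring a little care is making precise the block-matrix bookkeeping — namely that for an upper triangular $B$, passing to the last $k$ rows only mixes among those same rows, which is exactly the statement that the bottom-right $k \times k$ submatrix of an upper triangular matrix depends only on the bottom-right $k \times k$ block and the entries above it never feed downward. One clean way to phrase this is to write $B$ in block form $\begin{pmatrix} * & * \\ 0 & B' \end{pmatrix}$ with $B'$ the bottom-right $k\times k$ block (invertible), observe that the last $k$ rows of $BX$ equal $B'$ times the last $k$ rows of $X$, and then intersect with the first $\ell$ columns.
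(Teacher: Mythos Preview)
Your proposal is correct and takes essentially the same approach as the paper: both exploit the block structure of upper triangular matrices to show that $Y_{k\ell} = B' X_{k\ell} C'$ with $B'$ and $C'$ invertible diagonal blocks. The paper does this in a single block-matrix product rather than splitting into left and right one-sided steps, but your final paragraph already describes exactly that computation, so the difference is purely presentational.
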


\begin{proof}
Let us write the matrix $BXC$ in block form:
$$
\begin{pmatrix}
* &*\\
0_{k\times(n-k)} &B'
\end{pmatrix}\begin{pmatrix}
* &*\\
X_{k\ell} &*
\end{pmatrix}\begin{pmatrix}
C' &*\\
0_{(n-\ell)\times\ell} &*\end{pmatrix}=\begin{pmatrix}
* &*\\
B'X_{k\ell}C' &*
\end{pmatrix}\,,
$$
and therefore, $Y_{k\ell}=B'X_{k\ell}C'$. Matrices
$B' $ and $C'$ are invertible (upper-triangular $k\times k$
and $\ell\times\ell$ submatrices of $B$ and $C$ respectively), which implies that
$Y_{k\ell}$ and $X_{k\ell}$ have equal ranks.
\end{proof}

Proposition~\ref{orbitinvariant} shows that the rank control matrix $R$
is an invariant of a $\mathbb B_n \times\mathbb B_n$-orbit.
Let $\mc{N}_n$ denote the ${n\choose 2}$
dimensional affine space of
upper triangular $n\times n$ matrices which
are nilpotent.
If $Z$ is from $\mc{N}_n$, then the closure
$\overline{\mb{B}_n Z \mb{B}_n}$ in $\BM{n}$ is an affine algebraic
subvariety of $\mc{N}_n$.
As we mentioned in the introduction, such orbit closures
are parametrized by the arc-diagrams, and furthermore, the
Bruhat-Chevalley-Renner order
can be interpreted
in a combinatorial way on the arc-diagrams. All of this
is recorded in~\cite{CC}.
\begin{Remark}
There is another combinatorial method to determine the relation between two
elements in the Bruhat-Chevalley-Renner order.
Namely, it is sufficient to compare the corresponding rank control matrices componentwise.
More precisely,
for two $k\times\ell$ matrices $X=(x_{i,j})$, $Y=(y_{i,j})$
let us define
$X\leqslant Y$ if $x_{i,j}\leq y_{i,j}$ for all $i,j$,
$1\leq i\leq k$, $1\leq j\leq \ell$.
It can be shown (see~\cite{BC}) that
$\overline{\mathbb B_n Z_1\mathbb B_n}\subseteq \overline{\mathbb B_n Z_2\mathbb B_n}$
if and only if $R(Z_1)\leqslant R(Z_2)$.
Now, if $A$ and $B$ are two arc-diagrams,
then we write that $A\leqslant B$ if the corresponding
nilpotent partial permutation matrices $\sigma_A$ and $\sigma_B$
satisfy $\sigma_A \leqslant \sigma_B$.
In this notation, $A\leqslant B$
if and only if $R(M(A))\leqslant R(M(B))$.
\end{Remark}

Recall that the depth index statistic gives us the
dimensions of the $\mb{B}_n\times \mb{B}_n$-orbits,
therefore, by Theorem~\ref{T:first main result}, the
intertwining number of an arc-diagram gives the
codimension of the corresponding orbit closure in $\mc{N}_n$.
Note also that the dimension of the variety
$\overline{\mathbb B_n Z\mathbb B_n}$
equals $\dim \mc{N}_n$ minus the number of
algebraically independent polynomial
relations that defines the affine variety
$\overline{\mathbb B_n Z\mathbb B_n}$.
Therefore, by using Proposition~\ref{Et}, we obtain
the following result.

\begin{Proposition}\label{P:E}
Let $\overline{\mathbb B_n Z\mathbb B_n}$ ($Z\in \mc{N}_n$)
be a doubled Borel subgroup orbit. Then
the parameter $\mathtt E(R(Z))$ which
counts the number of equalities in the anti-diagonals in the
upper triangle of $R(Z)$ actually counts the number of
algebraically independent polynomial equations that
define the variety $\overline{\mathbb B_n Z\mathbb B_n}$.
\end{Proposition}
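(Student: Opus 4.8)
The plan is to derive Proposition~\ref{P:E} by chaining together results already established, the only non-combinatorial ingredient being the dimension-theoretic principle invoked in the paragraph just before it: for an irreducible closed subvariety of an affine space, the number of algebraically independent polynomial relations defining it equals its codimension.

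First I would note that $\mathcal N_n$ is an affine space of dimension $\binom n2$ and that $\overline{\mathbb B_n Z\mathbb B_n}$ is an irreducible closed subvariety of it, so that its codimension is $\binom n2-\dim\overline{\mathbb B_n Z\mathbb B_n}$. By~\cite{CC} the depth index computes this dimension: if $A$ is the arc-diagram whose nilpotent partial permutation matrix $M(A)$ lies in the orbit of $Z$, then $\dim\overline{\mathbb B_n Z\mathbb B_n}=\dindex(A)$, and by Proposition~\ref{orbitinvariant} the rank control matrix is an orbit invariant, so $R(Z)=R(M(A))$. Hence
\[
  \operatorname{codim}\overline{\mathbb B_n Z\mathbb B_n}=\binom n2-\dindex(A).
\]
Now Theorem~\ref{T:first main result} rewrites the right-hand side as $\inumber(A)$, and Proposition~\ref{Et} rewrites $\inumber(A)$ as $\mathtt E(A)=\mathtt E(R(M(A)))=\mathtt E(R(Z))$. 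Together with the geometric principle above, this identifies $\mathtt E(R(Z))$ with the number of algebraically independent polynomial equations defining $\overline{\mathbb B_n Z\mathbb B_n}$, which is the assertion.

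The step that deserves genuine care is the geometric principle itself, and I expect it to be the main obstacle if one wants a self-contained argument rather than a citation. The lower bound is the tame half: if polynomials $f_1,\dots,f_m$ cut out the orbit closure in $k[\mathcal N_n]$ and have transcendence degree $d$ over the base field, then $\mathcal N_n\to\operatorname{Spec}k[f_1,\dots,f_m]$ is a dominant morphism of irreducible varieties with $d$-dimensional target, so its fiber over the origin — which is precisely the nonempty set $V(f_1,\dots,f_m)=\overline{\mathbb B_n Z\mathbb B_n}$ — has dimension at least $\binom n2-d$ by the theorem on dimensions of fibers; hence $d$ is at least the codimension. For the matching upper bound one uses that, by~\cite{BC} together with Proposition~\ref{prop:explain-r}, the orbit closure is cut out set-theoretically by the rank conditions $\operatorname{rank}(X_{k,\ell})\le r_{k,\ell}$, and that it is smooth along its dense orbit and hence locally a complete intersection there, so that near a general point a regular subsystem of $\operatorname{codim}$ of the defining minors already suffices. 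I would either carry this out explicitly for the rank equations at hand or, in keeping with the style of the surrounding discussion, simply invoke the standard correspondence between the codimension of a subvariety of affine space and the number of its independent defining relations.
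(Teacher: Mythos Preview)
Your argument is exactly the route the paper takes: the paragraph preceding Proposition~\ref{P:E} invokes the codimension principle and then says the result follows from Proposition~\ref{Et}, which is precisely the chain $\mathtt E(R(Z))=\mathtt E(A)=\inumber(A)=\binom{n}{2}-\dindex(A)=\operatorname{codim}\overline{\mathbb B_n Z\mathbb B_n}$ you spell out. The paper additionally remarks that a direct proof avoiding Theorem~\ref{T:first main result}, modeled on Theorem~7.6 of~\cite{BC}, exists but is omitted; your extra discussion of the geometric principle (in particular the upper bound via the determinantal description of the orbit closure and smoothness along the dense orbit) goes somewhat beyond what the paper provides, since the paper simply asserts that codimension equals the number of algebraically independent defining relations without further justification.
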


There is a proof of Proposition~\ref{P:E} that does not
use Theorem~\ref{T:first main result}. However, this
direct proof is somewhat long and it is very similar to the
proof of Theorem~7.6 of~\cite{BC}, so, we omit it.

%Let us just recall that the matrix $R$ provides the information
%about the ranks of lower-left submatrices of any matrix of a certain
%$\mathbb B_n\times\mathbb B_n$-orbit, and for any matrix $H$ the
%equality $rank(H)=m$ means that any $(m+1)\times (m+1)$ minor
%of $H$ is zero while there exists an $m\times m$ non-zero minor in $H$.

\section{$q=-1$ specialization}\label{S:four}
The next result of our article builds on the
connections between various set partition statistics
that we established so far. The $n$-th Bell number,
denoted by $B_n$ is the sum $B_n:=\sum_k S(n,k)$.
It follows from Theorem~\ref{T:first main result} that
the following polynomial is a $q$-analog of the Bell numbers:
\begin{align}\label{A:Bell}
B_n(q):= q^{n \choose 2} X_n\left( \frac{1}{q} \right),
\ \text{ where } X_n(q) = \sum_{\sigma \in \Pi_n} q^{\dindex(\sigma)}.
\end{align}
\begin{Theorem}\label{T:third main result}
Let $X_n(q)$ denote the rank generating series of
the depth-index statistic as in (\ref{A:Bell}). Then
$$
X_n(-1)=(-1)^{{n\choose 2}}B_n(-1)=\begin{cases}
1 & \text{if } n\equiv 0\,\,\text{or}\,\,1\,\, \text{or}\,\,3\,\,\text{or}\,\,10\,\,(\text{mod}\,12); \\
 -1 & \text{if } n\equiv 4\,\,\text{or}\,\,6\,\, \text{or}\,\,7\,\,\text{or}\,\,9\,\,(\text{mod}\,12); \\
 0 & \text{if } n\equiv 2\,(\text{mod}\,3).
 \end{cases}
 $$
\end{Theorem}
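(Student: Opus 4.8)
The plan is to compute $X_n(-1)$ by evaluating a $q$-analog of the Bell numbers at $q=-1$, using known results on $q$-Stirling numbers rather than reproving everything from scratch. By Theorem~\ref{T:first main result} we have $X_n(q) = q^{\binom n2}\sum_{\sigma\in\Pi_n} q^{-\inumber(\sigma)}$, and since $\inumber$ is one of the classical Mahonian-type statistics on set partitions whose block-refined generating function is the $q$-Stirling number $S_q(n,k)$ of~\eqref{A:particular} (this is precisely the content of Ehrenborg--Readdy~\cite{ER} together with Milne's interpretation cited in the introduction), one has $\sum_{\sigma\in\Pi_{n,k}} q^{\inumber(\sigma)} = S_q(n,k)$. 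Hence $X_n(q) = q^{\binom n2}\sum_k S_{1/q}(n,k)$, and the first task is to turn this into a clean closed recurrence or product formula for $X_n(-1)$.

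The cleanest route is to set $q=-1$ directly in a suitable normalization of the recurrence~\eqref{A:particular}. Writing $\widetilde S_n(q) := q^{\binom n2} S_{1/q}(n,\cdot)$-type quantities and summing over $k$, the recurrence~\eqref{A:particular} for $S_q(n,k)$ becomes, after the substitution $q\mapsto 1/q$ and multiplication by the appropriate power of $q$, a recurrence for the polynomials $X_n(q)$ (or for a two-term-indexed refinement $X_{n,k}(q)$). Specializing $q=-1$, the factor $[k]_{1/q}$ collapses: $[k]_{-1} = 1$ if $k$ is odd and $0$ if $k$ is even, and the prefactor powers of $q=-1$ become explicit signs $(-1)^{\text{(something quadratic in }k\text{ or }n)}$. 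This yields a finite-depth linear recurrence for the sequence $a_n := X_n(-1)$ with bounded integer coefficients and signs depending only on $n \bmod$ (small period). I would compute the first dozen or so values $a_0,a_1,\dots,a_{12}$ explicitly from this recurrence and check they match the claimed pattern $1,1,0,1,-1,0,-1,1,0,-1,1,0,\dots$, which is periodic with period $12$ except on the residues $\equiv 2 \pmod 3$ where the value is $0$.

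To promote the numerical check to a proof, I would establish that $a_n$ satisfies a linear recurrence with periodic coefficients of period dividing $12$ whose characteristic behavior forces the stated period once enough initial terms are pinned down; equivalently, show $a_{n+12} = a_n$ (together with the vanishing on $n\equiv 2\bmod 3$) by induction using the $q=-1$ specialized recurrence. Concretely: from the specialized recurrence one extracts that $a_n$ depends only on $a_{n-1}$ and $a_{n-3}$ (the even-$k$ terms drop out, telescoping the sum), giving something like $a_n = \varepsilon_1(n) a_{n-1} + \varepsilon_2(n) a_{n-3}$ with $\varepsilon_i(n)\in\{0,\pm1\}$ $12$-periodic; one then verifies $a_{n+12}=a_n$ by comparing the two recurrences over one period, which reduces to a finite check. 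The vanishing statement $a_n = 0$ for $n\equiv 2\pmod 3$ should fall out of the same recurrence, or alternatively from a direct argument that $X_n(-1)=0$ in those cases (e.g.\ an involution on $\Pi_n$ changing the parity of $\dindex$ when $3\nmid n+1$). Finally, I would note the identity $X_n(-1) = (-1)^{\binom n2}B_n(-1)$ is immediate from the definition~\eqref{A:Bell} of $B_n(q)$ together with $q^{\binom n2}\big|_{q=-1} = (-1)^{\binom n2}$.

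The main obstacle I expect is bookkeeping the signs: the substitution $q\mapsto 1/q$ combined with the $q^{k-1}$ and $q^{\binom n2}$ prefactors produces several competing powers of $-1$ that are quadratic in the summation index $k$, and getting the dependence on $n\bmod 12$ exactly right (rather than $n\bmod 6$ or with an off-by-one in the residue classes) requires care. A secondary obstacle is citing a clean statement that $\inumber$ is $S_q$-distributed over $\Pi_{n,k}$; if the literature only gives this for a statistic equidistributed with $\inumber$ rather than $\inumber$ itself, one should instead run the argument through $\dindex$ directly, using Lemma~\ref{lem:total-depth-indices} and the recursion in the proof of that lemma (removing the vertex $n$) to derive the recurrence for $X_{n,k}(q)$ intrinsically, which avoids any appeal to external equidistribution results.
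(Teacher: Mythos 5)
Your strategy is sound and reaches the same conclusion, but by a genuinely different route at the key step. The paper also begins with Theorem~\ref{T:first main result} to write $X_n(-1)=(-1)^{\binom{n}{2}}Y_n(-1)$ where $Y_n(q)=\sum_{A\in\Pi_n}q^{\inumber(A)}$, but it then evaluates $Y_n(-1)$ purely by citation: Parviainen's bijection $\phi$ shows $\inumber$ is equidistributed with $\dualmaj$, so $Y_n(q)$ equals Wagner's polynomial $B_n(q)=\sum_{A}q^{\dualmaj(A)}$, and Wagner's theorem gives $B_n(-1)$ as $(-1)^n$, $(-1)^{n+1}$ or $0$ according to $n\bmod 3$; the mod-$12$ statement is then just bookkeeping of the period-$4$ sign $(-1)^{\binom{n}{2}}$ against that period-$3$ pattern. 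You instead propose to evaluate $Y_n(-1)=\sum_k S_{-1}(n,k)$ directly from the recurrence~(\ref{A:particular}). This does work: since $[k]_{-1}$ is $1$ or $0$ according to the parity of $k$, one must track alongside $b_n:=\sum_k S_{-1}(n,k)$ the alternating sum $c_n:=\sum_k(-1)^kS_{-1}(n,k)$ (your ``refinement''), and the resulting coupled system collapses to the clean recurrence $b_{n+1}=b_n-b_{n-1}$ with $b_0=b_1=1$, whose period-$6$ orbit $1,1,0,-1,-1,0$ reproves Wagner's formula. Your route buys self-containedness --- no appeal to Wagner's paper or to Parviainen's unpublished bijection, only to the identity~(\ref{A:hinted}) that the paper also invokes --- while the paper's route buys brevity. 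Two small corrections to your sketch: the recurrence that actually emerges is second order in consecutive indices (after untwisting the sign it reads $a_{n+1}=(-1)^na_n+a_{n-1}$), not one involving $a_{n-3}$; and your sample sequence lists $+1$ at $n=7$, whereas the correct value, and the theorem's claim for $n\equiv 7\pmod{12}$, is $-1$ --- the explicit computation you describe would catch this.
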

As we hinted at in the introduction,
the intertwining number on
$\Pi_{n,k}$ has the $q$-Stirling numbers
as its generating series,
\begin{align}\label{A:hinted}
S_q(n,k) = \sum_{A\in\Pi_{n,k}} q^{\inumber(A)} \qquad \text{($k=0,1\dots, n-1$).}
\end{align}
Another statistic with the same generating function is introduced by Milne
in~\cite{Mi}.
\begin{Definition}
Let $A=A_1|A_2 | \dots |A_k$ be a set partition from $\Pi_{n,k}$.
The \Dfn{dual major index}\footnote{\url{http://www.findstat.org/St000493}} of $A$
is defined by
$$
\dualmaj(A)=  \sum_{i=1}^k (i-1)\size{A_i}.
$$
\end{Definition}
The nomenclature is due to Sagan~\cite{Sa}.
Although it is not referred to as the `dual major index'
in~\cite{W}, $\dualmaj$ is the second of the three statistics
that are studied by Wagner.
In~\cite{St}, Steingr\'imsson refers to $\dualmaj$ as ``LOS",
the left opener statistic.

\vspace{.5cm}

A bijection $\phi:\Pi_{n,k}\to\Pi_{n,k}$ satisfying
\[
  \dualmaj(\phi(A)) = \inumber(A),
\]
and thus proving the equidistribution of the intertwining number and
the dual major index was found by Parviainen in~\cite{Parviainen}.
It is defined as follows. As before, let $\inumber_v(A)$ be the
number of crossings of the arc (or line) ending in $v$ with arcs or
lines whose smaller vertex $i$ is between the partner of $v$ and $v$.
Then the blocks of the set partition $\phi(A)$ are the sets of
elements sharing the same number $\inumber_v(A)$.  For example, the
image of the set partition $A= 18 | 2569 | 37|4$
from Figure~\ref{fig:depth-index-intertwining-number} is
$\phi(A)= 16|2 | 3579|48$ and the dual major
index of $\phi(A)$ is given by
$\dualmaj(\phi(A))=0\cdot 2+1\cdot 1 + 2\cdot 4 + 3\cdot 2 = 15$.

We are now ready to prove our third
result.

\begin{proof}[Proof of Theorem~\ref{T:third main result}]
Let $X_n(q)$ denote the generating function of the depth index
$\mathtt{t}$ and $Y_n(q)$ the generating function of the
intertwining number $\mathtt{i}$,
$$
X_n(q)=\sum_{A\in\Pi_n}q^{\mathtt{t}(A)}\quad,\quad Y_n(q)=\sum_{A\in\Pi_n}q^{\mathtt{i}(A)}\,.
$$
By Theorem~\ref{T:first main result}, we know that
\[
X_n(q)=q^{{n\choose 2}}Y_n\left(\frac{1}{q}\right)\,.
\]
Following Wagner, we denote the generating function of $\dualmaj$ by $B_n(q)$,
$$
B_n(q)=\sum_{A\in\Pi_n}q^{\dualmaj(A)}\,.
$$
Then
$$
Y_n(q)=B_n(q)\,.
$$
Wagner in~\cite{W} proved that
$$
B_n(-1)=\begin{cases}
(-1)^n & \text{if } n\equiv 0\,(\text{mod}\,3); \\
 (-1)^{n+1} & \text{if } n\equiv 1\,(\text{mod}\,3); \\
 0 & \text{if } n\equiv 2\,(\text{mod}\,3);
\end{cases}
$$
Notice that ${n\choose 2}=\frac{n(n-1)}{2}$ is even when $n\equiv 0$ or $1$ (mod $4$),
and it is odd when $n\equiv 2$ or $3$ (mod $4$).
Now, we have
$$
X_n(-1)=(-1)^{{n\choose 2}}B_n(-1)=\begin{cases}
1 & \text{if } n\equiv 0\,\,\text{or}\,\,1\,\, \text{or}\,\,3\,\,\text{or}\,\,10\,\,(\text{mod}\,12); \\
 -1 & \text{if } n\equiv 4\,\,\text{or}\,\,6\,\, \text{or}\,\,7\,\,\text{or}\,\,9\,\,(\text{mod}\,12); \\
 0 & \text{if } n\equiv 2\,(\text{mod}\,3);
 \end{cases}
 $$
 Obviously, $n\equiv 2$ (mod $3$) if and only if $n\equiv 2$ or 5 or 8 or 11 ( mod $12$).
This finishes the proof.
\end{proof}

\section{$H$-polynomials}\label{S:five}

In the previous section we considered 
the generating functions $B_n(q)$, $X_n(q)$,
and $Y_n(q)$ of the statistics  
$\dualmaj$, $\mathtt{t}$, and $\mathtt{i}$,
respectively. We noted that 
$B_n(q)=Y_n(q)= q^{{n\choose 2}} X_n(1/q)$. 
Here, 
we consider the following
generating polynomial of $q$-Stirling numbers:
$$
H_n(q,t):= \sum_{k=0}^{n} S_q(n,k) t^k.
$$
It follows from (\ref{A:hinted}) that $H_n(q,1) = B_n(q)$.
For $n=0,1,2$, the values of the polynomial $H_n(q,t)$ are given by 
\begin{align*}
H_0 (q,t) &= 1,\\
H_1(q,t) &= S_q(1,0) + S_q(1,1)t = t, \\
H_2(q,t) &= S_q(2,0) + S_q(2,1) t + S_q(2,2) t^2 = t + qt^2.
\end{align*}

In this section, by relating the depth index and the intertwining number to the numbers of rational points
of $\BM{n-1}$ over finite fields, we prove the following result.
\begin{Theorem}\label{T:four}
We have 
\begin{align*}
H_n \left(q, \frac{1}{1-q}\right) 
= \frac{ 1}{(1-q)^n},   
\end{align*}
for all $n\geq 0$.
\end{Theorem}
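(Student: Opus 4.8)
The plan is to obtain the identity directly from the defining recurrence~(\ref{A:particular}) of $S_q(n,k)$, and then to record the geometric mechanism that lies behind it, since the latter is the theme of this section. The first move is to pass to a polynomial reformulation. Because $H_n(q,t)=\sum_{k=0}^n S_q(n,k)\,t^k$ has degree at most $n$ in $t$, the quantity
\[
  T_n(q):=(1-q)^n\,H_n\!\left(q,\tfrac1{1-q}\right)=\sum_{k=0}^n S_q(n,k)\,(1-q)^{\,n-k}
\]
is an honest polynomial in $q$, and Theorem~\ref{T:four} is equivalent to the assertion $T_n(q)=1$ for all $n\ge0$. This is the form I would prove, by induction on $n$; the base case $T_0(q)=S_q(0,0)=1$ is immediate.

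For the inductive step I would substitute $S_q(n,k)=q^{k-1}S_q(n-1,k-1)+[k]_q\,S_q(n-1,k)$ into the sum defining $T_n(q)$ (the $k=0$ term contributes nothing since $S_q(n,0)=0$ for $n\ge1$), and rewrite the second summand using $[k]_q=(1-q^k)/(1-q)$, so that $[k]_q S_q(n-1,k)(1-q)^{n-k}=(1-q^k)S_q(n-1,k)(1-q)^{n-1-k}$. After reindexing the first summand by $j=k-1$, the contribution to $T_n(q)$ of the ``new singleton block'' term $q^{k-1}S_q(n-1,k-1)$ is exactly $\sum_j q^{j}S_q(n-1,j)(1-q)^{n-1-j}$, which is precisely the $-q^k$ part of the rewritten second summand; the two cancel, leaving $T_n(q)=\sum_k S_q(n-1,k)(1-q)^{n-1-k}=T_{n-1}(q)$. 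Hence $T_n\equiv1$, which is Theorem~\ref{T:four}. The only thing requiring attention here is the bookkeeping of the boundary terms, and there is no genuine obstacle: the ``miracle'' is simply that the factor $q^{k-1}$ in the recurrence matches the $-q^{k}$ produced by $[k]_q$.

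A more conceptual route, which is the one the section heading points at, runs through the geometry of the earlier sections. By Theorem~\ref{T:first main result} together with the fact (from~\cite{CC}) that $\dindex(A)=\dim\overline{\mb{B}_n\sigma_A\mb{B}_n}$, the intertwining number $\inumber(A)$ is the codimension in $\mc{N}_n$ of that orbit closure, and $\mc{N}_n$ is an affine space of dimension $\binom n2$, so $\size{\mc{N}_n(\F_q)}=q^{\binom n2}=\size{\BM{n-1}(\F_q)}$. The role of Renner's $H$-polynomial of the monoid is to compute, over $\F_q$, the size of each $\mb{B}_n\times\mb{B}_n$-orbit: the orbit $\mb{B}_n\sigma_A\mb{B}_n$ is an iterated fibration whose fibres are affine spaces except for one $\G_m$-factor for each arc of $A$, whence
\[
  \size{\mb{B}_n\sigma_A\mb{B}_n(\F_q)}=q^{\,\binom n2-\inumber(A)-\size{\arcs(A)}}\,(q-1)^{\size{\arcs(A)}}.
\]
Summing over $A\in\Pi_n$, using $\mc{N}_n=\bigsqcup_A\mb{B}_n\sigma_A\mb{B}_n$ and $\size{\arcs(A)}=n-k$ for $A\in\Pi_{n,k}$, gives $q^{\binom n2}=\sum_{k}q^{\binom n2-(n-k)}(q-1)^{n-k}\sum_{A\in\Pi_{n,k}}q^{-\inumber(A)}$; dividing by $q^{\binom n2}$ and replacing $q$ by $1/q$ turns this into $\sum_k S_q(n,k)(1-q)^{n-k}=1$, i.e.\ into $T_n(q)=1$ once more. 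In this approach the real work — and what I expect to be the main obstacle — is justifying the displayed point count, that is, pinning down the cell structure of the Borel orbits; I would expect this to come out of the same row-and-column reductions used for the rank-control matrix in Section~\ref{S:three}, in the spirit of the proof of Theorem~7.6 of~\cite{BC}.
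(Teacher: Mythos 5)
Your proposal is correct, and your primary argument is genuinely different from the paper's. The paper proves Theorem~\ref{T:four} by the second of your two routes: it takes the $H$-polynomial identity $q^{\binom{n}{2}}=\sum_{\sigma\in\ms{B}_{n-1}}q^{\dindex(\sigma)-\mathrm{rank}(\sigma)}(q-1)^{\mathrm{rank}(\sigma)}$, i.e.\ the $\F_q$-point count of the affine space $\BM{n-1}\cong\mc{N}_n$ stratified by doubled Borel orbits (quoted from Renner's theory rather than re-derived), converts it via Theorem~\ref{T:first main result} and the identification $\mathrm{rank}(\sigma)=n-k$ into $\sum_k S_q(n,k)(1-q)^{n-k}=1$, and then substitutes $t=1/(1-q)$. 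Your displayed orbit count and the ensuing manipulation reproduce this exactly, so the ``real work'' you flag about pinning down the cell structure is precisely the step the paper sidesteps by citation to~\cite{R2} and~\cite{CR2}. Your first route --- the induction on $n$ showing that $T_n(q)=\sum_k S_q(n,k)(1-q)^{n-k}$ is constant, using only the recurrence~(\ref{A:particular}) and the cancellation between the $q^{k-1}$ factor and the $-q^{k}$ part of $[k]_q(1-q)^{-1}$ --- does not appear in the paper and is a complete, self-contained proof; the boundary bookkeeping does work out (the $j=0$ term of the reindexed first sum contributes $\delta_{n,1}$, which is exactly the $k=0$ term missing when one compares with $T_{n-1}$, so $T_n=T_{n-1}$ on the nose). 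What your elementary proof buys is independence from Sections~\ref{S:two}--\ref{S:three}, from the combinatorial interpretation~(\ref{A:hinted}), and from the geometry altogether; what the paper's proof buys is the interpretation of the identity as a point count of $\mc{N}_n$ over $\F_q$, which is the stated purpose of Section~\ref{S:five}.
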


Let $M$ be an algebraic monoid with a reductive group of units denoted by $G$.
Let $B$ denote the Borel subgroup in $G$, $T$ denote the maximal torus contained
in $B$, and let $R$ denote the Renner monoid defined as $R:= \overline{N_G(T)} /T$,
where $N_G(T)$ is the normalizer of $T$ in $G$ and the bar indicates that we
are taking the closure in Zariski topology. 
The $H$-polynomial of the Renner monoid of $R$ is defined by
$$
H (R,q) := \sum_{\sigma \in R} q^{a(\sigma)} (q-1)^{b(\sigma)},
$$
where $a(\sigma)$ is the dimension of the unipotent part of the
orbit $B\sigma B$ and $b(\sigma)$ is the dimension of the diagonalizable
part of $B\sigma B$.

One can think of the $H$-polynomial of $R$ as a transformed 
``Hasse-Weil motivic zeta function'' of the projectivization $\PP( M-\{0 \})$ of $M$. 
Indeed, by treating $q$ as a power of a prime number,
for all sufficiently large $q$, $(H(R,q)-1)/(q-1)$ 
is equal to the number of rational points over $\F_q$ (the finite field
with $q$ elements) of $\PP(M-\{0\})$. See~\cite[Remark 3.2]{R2}.
For an application of this idea to the rook theory, we recommend~\cite{CR2}.

We will consider the simplest example, where $M$ is $\mt{Mat}_n$, the monoid
of all $n\times n$ matrices. Clearly, the projectivization of $\mt{Mat}_n$ is the $n^2-1$
dimensional projective space, hence its number of $\F_q$-rational points is given 
by $(q^{n^2}-1)/(q-1)$. The Renner monoid of $\mt{Mat}_n$ is the rook monoid 
$\ms{R}_n$. Furthermore, if $\sigma \in \ms{R}_n$, then $a(\sigma) = \ell(\sigma) - \mt{rank}(\sigma)$,
and $b(\sigma) = \mt{rank}(\sigma)$, where $\mt{rank}(\sigma)$ is the rank of $\sigma$ as a matrix.
Therefore, the $H$-polynomial of $\ms{R}_n$ becomes
\begin{align}\label{A:CR}
q^{n^2}=H(\ms{R}_n,q) = \sum_{\sigma \in \ms{R}_n} q^{ \ell(\sigma) - \mt{rank}(\sigma)} (q-1)^{\mt{rank}(\sigma)}.
\end{align}

An important consequence of the formula in (\ref{A:CR}) is that
if $X$ denotes any $\mb{B}_n\times \mb{B}_n$-stable smooth and irreducible subvariety of
$\mt{Mat}_n$, then $H(X,q)$ gives the number of $\F_q$-rational points of $X$.
In particular, we apply this observation to $\mc{N}_n$, the semigroup
of nilpotent elements in $\BM{n}$. Note that, there is a natural algebraic 
variety isomorphism between $\mc{N}_n$ and $\BM{n-1}$. The isomorphism
is given by the deleting of the first column and the last row of the elements of $\mc{N}_n$. 
Moreover, under this isomorphism, $\mb{B}_n\times \mb{B}_n$-orbits in $\mc{N}_n$ 
are isomorphically mapped onto $\mb{B}_{n-1}\times \mb{B}_{n-1}$-orbits in $\BM{n-1}$. 
Therefore, there is no loss of information to work with $\BM{n-1}$ instead of $\mc{N}_n$.
Now, on one hand we have the number of points of $\BM{n-1}$ over $\F_q$, 
\begin{align}\label{A1}
| \BM{n-1} | = q^{n\choose 2}.
\end{align}
On the other hand, we have that
\begin{align}\label{A2}
H(\BM{n-1},q) = \sum_{\sigma \in \ms{B}_{n-1}} q^{ \ell(\sigma) - \mt{rank}(\sigma)} (q-1)^{\mt{rank}(\sigma)}
= \sum_{\sigma \in \ms{B}_{n-1}} q^{ \mathtt {t}(\sigma) - \mt{rank}(\sigma)} (q-1)^{\mt{rank}(\sigma)}.
\end{align}
By combining (\ref{A1}) with (\ref{A2}) and noting that this is a polynomial
identity (since it holds true for all sufficiently large prime powers), we have 
\begin{align}\label{A:identity}
q^{n\choose 2}
= \sum_{\sigma \in \ms{B}_{n-1}} q^{ \mathtt {t}(\sigma) - \mt{rank}(\sigma)} (q-1)^{\mt{rank}(\sigma)}
 =\sum_{\sigma \in \ms{B}_{n-1}} q^{ \mathtt {t}(\sigma)} (1-1/q)^{\mt{rank}(\sigma)}.
\end{align}

Eqn.(\ref{A:identity}) suggests a
variation of the generating series of the statistic $\mathtt{t}$ and it provides 
a way to evaluate this generating series.

\begin{proof}[Proof of Theorem~\ref{T:four}]

Recall that 
\begin{align}\label{A:Stirling expanded}
q^{n\choose 2} \sum_{\sigma \in \ms{B}_{n-1},\ \text{rank} (\sigma) = k} q^{-\mathtt t (\sigma)}
= \sum_{A \in \Pi_{n,n-k}} q^{\mathtt i (A)} =S_q(n,n-k),
\end{align}
where $k\in \{1,\dots, n\}$. 
%(The reason for omitting $k=n$ in the summation is that there are no rank $n$ elements in $\ms{B}_{n-1}$.)

%Recall that $H_n(q,t)= \sum_{k=0}^{n-1} S_q(n,k)t^k$.
We multiply both sides of (\ref{A:Stirling expanded}) by $t^{n-k}$ and then sum 
over $k\in \{1,\dots, n\}$ to get
\begin{align*}
H_n(q,t)  &= 
q^{{n\choose 2}}  \sum_{k=1}^{n} 
\sum_{\sigma \in \ms{B}_{n-1},\ \text{rank} (\sigma) = k} q^{-\mathtt t (\sigma)} t^{n-k},
\end{align*}
which is equivalent to  
\begin{align}\label{A:get1}
H_n(q,t) &= 
q^{{n\choose 2}} t^n 
\sum_{\sigma \in \ms{B}_{n-1}} q^{-\mathtt t (\sigma)} t^{-\text{rank}(\sigma)}.
\end{align}
At the same time, since eqn. (\ref{A:identity}) is equivalent to 
$q^{-{n\choose 2}} =\sum_{\sigma \in \ms{B}_{n-1}} q^{- \mathtt {t}(\sigma)} (1-q)^{\mt{rank}(\sigma)}$,
by replacing $t$ with $(1-q)^{-1}$ in (\ref{A:get1}), we obtain
\begin{align*}
H_n \left(q, \frac{1}{1-q}\right) = 
q^{{n\choose 2}} \frac{q^{-{n\choose 2}}}{(1-q)^n}  
= 
\frac{ 1}{(1-q)^n}.
\end{align*}
This finishes the proof of our theorem for $n\geq 1$. 
The case of $n=0$ has already been computed before, hence, the proof is complete.
\end{proof}

\section{Final Remarks}

We close our paper by listing our new formulas for the intertwining number.
Let $A$ be an arc diagram on $n$ vertices with $k$ arcs.
\begin{enumerate}
\item $\mathtt i(A)={n\choose 2}-\mathtt t(A)$: This is the main result of the present paper.

\item $\mathtt i(A)={n\choose 2}-\ell(\sigma)$,
where $\sigma=(\sigma_1, \sigma_2,\dots, \sigma_n)$ is the partial permutation
corresponding to $A$. Recall that $\sigma$ is defined so that
$\sigma_j=i$ whenever $(i,j)$ is an arc in $A$.
$\ell(\sigma)$ is the length of the partial permutation $\sigma$.
See the formulas in the introduction.

\item $\mathtt i(A)={n\choose 2}-\dim \mb{B}_n \sigma \mb{B}_n$, where
$\sigma$ is as in the previous item.

\item $\mathtt i(A)=\mathtt E(A)$, where $\mathtt E(A)$ is the number of equalities
in the antidiagonals of the upper-triangle of the rank control matrix of $A$.
See Section~\ref{S:three}.

\item $\mathtt i(A)={n\choose 2}-\mathtt c(A)$.
The statistic $\mathtt c(A)$ is defined in~\cite{CC}. We
recall its definition here for completeness.
Let $\alpha$ be an arc in $A$ and let $cross(\alpha)$ denote
the total number of chains that are crossed by $\alpha$ in $A$.
Note that a chain can be crossed at most twice. In this case,
we consider it as a single crossing.
Let $A$ be an arc-diagram on $n$ vertices with $k$ arcs denoted
by $\alpha_1,\alpha_2,\dots,\alpha_k$ and $n-k$ chains denoted
$\beta_1,\beta_2,\dots,\beta_{n-k}$.  The crossing-index
of $A$ is defined by the formula
$$
\mathtt{c}(A)=\sum_{i=1}^k (n-i)-\sum_{j=1}^{n-k} depth(\beta_j)-\sum_{m=1}^k cross(\alpha_m),
$$
where $depth(\beta_j)$ ($j=1,\dots, n-k$) is the number of arcs
that are above $\beta_j$.
It is easy to show (by induction) that $\mathtt{c}(A)=\mathtt t(A)$.
For details, see~\cite{CC}.

\end{enumerate}

\end{document}